\documentclass[12pt,reqno]{amsart}
\usepackage[utf8]{inputenc}
\usepackage{amsmath}
\usepackage{amssymb}
\usepackage{amsthm}
\usepackage{xcolor}
\usepackage{fourier}
\usepackage{fullpage}
\usepackage{hyperref}
\usepackage{tikz}
\usepackage{soul}
\usepackage{mathrsfs}
\usepackage{enumitem}
\usepackage{bm}
\usepackage{mathtools}
\usepackage{tikz}
\usepackage{amssymb,amsmath,amstext,amsgen,amsfonts,amsthm,verbatim,hyperref,fullpage,enumitem}

\newcommand{\cH}{{\mathcal H}}

\newcommand{\He}{{\mathbb{H}}}

\newcommand{\ve}{{\varepsilon}}
\newcommand{\R}{{\mathbb{R}}}
\newcommand{\C}{{\mathbb{C}}}

\newcommand{\N}{\mathbb{N}}

\newcommand{\zero}{\mathbf{0}}
\DeclareMathOperator{\diam}{diam}

\newcommand{\stm}{\setminus}
\newcommand{\supp}{\operatorname{supp}}

\numberwithin{equation}{section}
\newtheorem{thm}{Theorem}[section]

\newtheorem{lem}[thm]{Lemma}

\newtheorem{cor}[thm]{Corollary}
\theoremstyle{definition}

\newtheorem{prop}[thm]{Proposition}
\theoremstyle{definition}

\newtheorem{defn}[thm]{Definition}
\theoremstyle{definition}

\theoremstyle{definition}

\newtheorem{remark}[thm]{Remark}

\numberwithin{equation}{section}

\title{On singular integrals with non-negative kernels in the Heisenberg group}

\author{Vasileios Chousionis}
\address{Department of Mathematics, University of Connecticut}
\email{vasileios.chousionis@uconn.edu}

\author{Sean Li}
\address{Department of Mathematics, University of Connecticut}
\email{sean.li@uconn.edu}

\author{Lingxiao Zhang}
\address{Department of Mathematics, University of Connecticut}
\email{lingxiao.zhang@uconn.edu}

\thanks{V.~C.\ was supported by  NSF grant 2247117. L.~Z. is supported by AMS-Simons travel grant. }

\begin{document}
\begin{abstract} In this paper we revisit nonnegative kernels in the first Heisenberg group $\He$, and in particular we further study the family
$$K_\alpha(x,y,z)= \frac{|z|^{\alpha/2}}{\|(x,y,z)\|_{H}^{\alpha+1}}, \quad \alpha>0,$$
which was introduced in \cite{CL}.

We first show that if $E \subset \He$ is a $1$-Ahlfors regular set and the SIO associated with the kernel $K_4$ 
is $L^2(E)$-bounded, then $E$ is  contained in a $1$-Ahlfors regular curve. Combined with the converse implication which was obtained by F\"assler and Orponen in \cite{FO1dim}, our result provides a characterization of uniform $1$-rectifiability in the Heisenberg group via the $L^2$-boundedness of a singular integral.

We also give a negative answer to a question of F\"assler and Orponen from \cite{FO1dim} by showing that for any $\alpha \in (0,2)$ there exists a $1$-Ahlfors regular curve $E_a$ such that the operators associated with the kernels $K_\alpha$ are not bounded in $L^2(E_\alpha)$.

We finally show that there exists a $1$-Ahlfors regular and purely $1$-unrectifiable set $E$ such that the singular integral associated with $|x| \|(x,y,z)\|^{-2}$ is $L^2(E)$ -bounded.

\end{abstract}
\maketitle
\section{Introduction and preliminaries}
Some of the most celebrated results at the interface of geometric measure theory and harmonic analysis concern the relations of rectifiability with the $L^2$ boundedness of singular integral operators (SIOs). For example, if $E \subset \C$ is $1$-Ahlfors regular, the Cauchy transform
$$C_E f\,(z)=\int_E \frac{f(w)}{z-w} d \mathcal{H}^1 (w),$$
is bounded in $L^2(E):=L^2(\mathcal{H}^1|_E)$ if and only if $E$ is contained in a $1$-Ahlfors regular curve. We say a 1-Ahlfors regular set is {\em uniformly 1-rectifiable} if it is contained in a 1-Ahlfors regular Lipschitz curve (for equivalent definitions of uniform rectifiability that extend to higher dimensions, see \cite{DS1}). 

The sufficient condition is due to David \cite{D2} and it even holds for more general smooth antisymmetric kernels. The necessary condition is a landmark result due to Mattila, Melnikov and Verdera \cite{MMV}. It is a remarkable fact that the proof in \cite{MMV} depends crucially on a special subtle positivity property of the Cauchy kernel related to an old notion of curvature named after Menger, see e.g., \cite{MeV,MMV} as well as the book \cite{tolsabook}. 

Very little is known about the behavior of SIOs associated with other $-1$-homogeneous, 1-dimensional Calder\'on--Zygmund kernels on 1-Ahlfors regular sets in $\C$. We call a kernel \emph{good} if its associated SIO is bounded on $L^2(E)$ exactly when $E$ lies in a 1-regular curve. Interestingly, all known examples---both good and bad---are related to 
\[
k_n(z) = \frac{x^{2n-1}}{|z|^{2n}}, \quad z=x+iy\in \C\setminus\{0\}, \; n\in\N.
\]
It was shown in~\cite{CMPT} that all $k_n$ with $n>1$ are also good, giving the first nontrivial examples beyond the Cauchy kernel.

Consider now the family $\kappa_t(z) = k_2(z) + t\,k_1(z)$. By \cite{CMPT,MMV}, $\kappa_t$ is good for $t>0$. Chunaev~\cite{chun} extended this to $t\leq -2$, and Chunaev, Mateu, and Tolsa~\cite{chumt} proved that the kernels $\kappa_t$ are good for $t\in(-2,-\sqrt{2})$. In contrast, Huovinen~\cite{huo} and Jaye--Nazarov~\cite{JN} constructed subtle counterexamples showing that for $t=-1$ and $t=-3/4$, boundedness of the associated SIO does \emph{not} imply rectifiability. 

In this paper, we will be interested in SIOs and $1$-rectifiability in the Heisenberg group. The \emph{Heisenberg group} $\He$ is  $\mathbb{R}^3$ equipped with the group operation
\begin{equation}\label{eq:Heis}
p \cdot q = (x+x', y+y', z+z' + \tfrac{1}{2}(xy' - yx')),
\end{equation}
for $p=(x,y,z)$ and $q=(x',y',z') \in \mathbb{R}^3$.  We will consider the Heisenberg group as a metric space $(\He,d)$ where
\begin{displaymath}
d:\He\times \He\to [0,\infty),\quad
d(p,q):= \|q^{-1} \cdot p\|,
\end{displaymath}
and $\|\cdot\|$ denotes the Koranyi norm in $\He$:
\begin{align*}
\|(x,y,z)\|= ((x^2 + y^2)^2 + z^2)^{1/4}.
\end{align*}
We will also frequently use the notation,
\begin{align*}
  NH(x,y,z) = |z|^{1/2},
\end{align*}
where $NH$ stands for non-horizontal.  
The metric $d$ is  homogeneous with respect to the \textit{dilations}
\begin{displaymath}
\delta_r:\He \to \He,\quad \delta_r((x,y,z))
=(rx,ry,r^2 z),\quad \mbox{ for }r>0.
\end{displaymath}

\begin{defn}
\label{czkernel}
A continuous function $K: \mathbb{H}\backslash \{0\} \to \mathbb{R}^d$ is called a \textit{$1$-dimensional Calder\'on-Zygmund (CZ) kernel} in $\mathbb{H}$ if there exist $\kappa \in (0,1),\beta \in (0,1], C_K \geq 1,$ such that:
\begin{itemize}
    \item (Growth condition) 
    \begin{equation}
    \label{czgrowth}
|K(p)| \leq C_K \frac{1}{\|p\|}, \quad \forall p\in \mathbb{H}\backslash \{0\},
    \end{equation}
    \item (H\"older continuity)
    \begin{equation}
    \label{czhol}
    |K(p_1)-K(p_2)| \leq C_K \frac{\|p_2^{-1} \cdot p_1\|^\beta}{\|p_1\|^{1+\beta}}, \quad \forall p_1, p_2\in \mathbb{H}\backslash \{0\} \text{ with } d(p_1, p_2) \leq \kappa \|p_1\|.
    \end{equation}
\end{itemize}
\end{defn}

A Borel set $E \subset \He$ is called $1$-Ahlfors regular if there exists $C>0$ such that
\begin{equation}
\label{eq:ADR}
C^{-1}r \leq \mathcal{H}^1(B(p,r) \cap E) \leq Cr, \quad \forall p\in E, 0<r \leq \diam(E),
\end{equation}
where $\mathcal{H}^1$ denotes the $1$-dimensional Hausdorff measure in $\He$ (with respect to the metric $d$).

If $K$ is a $1$-dimensional CZ kernel in $\He$ and $E\subset \He$ is a $1$-Ahlfors regular set then H\"older's inequality and \eqref{eq:ADR} imply that the \textit{truncated singular integrals}
$$
T_{\ve}f(p) = \int_{E \cap B(p,\ve)^c} K(p^{-1}\cdot q)f(q)\,d\mathcal{H}^1(q
), \quad p\in \mathbb{H},
$$
are well defined for $f\in L^p(E):=L^p(\mathcal{H}^1|_{E})$ and $\ve>0$ when $p \in (1,\infty)$. 

\begin{defn}\label{SIO}
Given a $1$-dimensional CZ kernel $K$ and a $1$-Ahlfors regular set in $\He$, we say that the singular integral operator (SIO) $T$ associated to $K$ and $E$ is bounded on $L^p(E), p \in (1,\infty),$ if the operators
$$
f\mapsto T_{\ve}f
$$
are bounded on $L^p(E)$ with constants independent of $\ve>0$. We set
$$\|T\|_p=\inf\left\{C>0:  \|T_\ve f\|_{L^p(E)} \leq C \|f\|_{L^p(E)}: f \in L^p(E), \ve>0\right\}.$$We also define the \textit{principal value of $f$ at $p$} to be
\begin{align*}
    \mathrm{p.v.}Tf(p) = \lim_{\ve \to 0} T_\ve f(p),
\end{align*}
when the limit exists. 
\end{defn}

Recall that all $1$-dimensional kernels in $\C$ that we discussed so far are odd, which is quite natural. Indeed, consider the $1$-dimensional Calder\'on--Zygmund kernel $$k(x,y)=|x-y|^{-1}$$ which lacks local integrability along the diagonal. In this case, one has $\int_I k(x,y)\,dy=\infty$ for every open interval $I\subset\R$. Thus, in the Euclidean setting, defining SIOs on lines or other regular $1$-dimensional sets hinges on the cancellation properties of the kernel. 

It was discovered by the two first named authors in \cite{CL} that the situation in the Heisenberg group $\He$ is entirely different. In \cite{CL} the following family of $-1$-homogeneous kernels in $\He$ was introduced: 
\begin{equation}
\label{seankernels}
K_\alpha (p)=\frac{NH(x,y,z)^\alpha}{\|(x,y,z)\|^{\alpha+1}}=\frac{|z|^{\alpha/2}}{\|p\|_{H}^{\alpha+1}}, \quad \alpha>0.
\end{equation}
The kernels $K_\alpha$ can be thought of as weighted versions of the Riesz kernel corresponding to the vertical component of $\He$. In \cite{CL} 
it was proved that if $E$ is contained in a $1$-regular curve, then the SIO associated with $K_8$ is bounded in $L^2(E)$. Conversely,  the $L^2(E)$-boundedness of the SIO associated with $K_2$ implies that $E$ is contained in a $1$-regular curve. These were the first non-Euclidean examples of kernels with such properties.  We stress that unlike the Euclidean case, where all known kernels related to rectifiability are odd, the kernels $K_\alpha$ are {\em even} and {\em nonnegative.} 

To build some intuition for why the positive kernels in \eqref{seankernels} might be bounded on Lipschitz curves, recall that Rademacher’s theorem ensures such curves in $\R^n$ are infinitesimally close to affine lines, on which antisymmetric kernels cancel. This cancellation is what controls the singularity in the Euclidean case. In the Heisenberg group, an analogue due to Pansu~\cite{pansu} states that Lipschitz curves are infinitesimally close to \emph{horizontal lines}, which are defined as left cosets of $1$-dimensional subgroups of $\He$ contained in $\{(x,y,0) \in \He: x,y \in \R \}.$ Since  $K_\alpha(p^{-1}q)$ vanish  precisely when $p,q$ lie on a horizontal line, the singularity is again under control in our case.

We will now focus on $-1$-homogeneous $1$-dimensional CZ kernels in $\He$ which all exhibit very different behavior. Recall that a function $f: \He  \stm \{\zero\} \to \R^d$ is called \textit{$\lambda$-homogeneous}, for $\lambda \in \R$, if
$$f(\delta_t p)=t^{\lambda} f(p).$$ Clearly the kernels $K_\alpha$, as in \eqref{seankernels} are $-1$-homogeneous. 

Our first theorem concerns the kernel $K_4$ from \ref{seankernels} and reads as follows.
\begin{thm}
\label{mainthm1} Let $E$ be a $1$-Ahlfors regular set. If the singular integral associated with $K_4$ and $E$ is bounded in $L^2(E)$; i.e., if the truncated singular integrals
$$T^4_\ve f\,(p):=\int_{E \stm B(p,\ve)} K_4(p^{-1} \cdot q) f(q) d \mathcal{H}^1(q)$$ are uniformly bounded in $L^2(E)$, then $E$ is contained in a $1$-Ahlfors regular curve.
\end{thm}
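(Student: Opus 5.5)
The plan is to follow the strategy of \cite{CL} for $K_2$: reduce $L^2$-boundedness of a nonnegative kernel to a square-function / $\beta$-number type estimate that characterizes uniform rectifiability in $\He$.

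\textbf{Step 1: from boundedness to a Carleson estimate.} Since $K_4\ge 0$, testing the uniformly bounded truncations on $f=\chi_{B(p_0,R)\cap E}$ and integrating over $E\cap B(p_0,R)$ yields
\begin{equation*}
\int_{E\cap B(p_0,R)}\int_{E\cap B(p_0,R)\setminus B(p,\ve)} K_4(p^{-1}\cdot q)\,d\mathcal H^1(q)\,d\mathcal H^1(p)\lesssim \|T^4\|_2^2 R .
\end{equation*}
Here we use Cauchy--Schwarz together with Ahlfors regularity (\ref{eq:ADR}). Letting $\ve\to 0$ by monotone convergence gives the bound with no truncation. Note that
$$K_4(p^{-1}q)=\frac{NH(p^{-1}q)^4}{d(p,q)^{5}}.$$
Decomposing dyadically in $d(p,q)\sim 2^{-k}$ produces a Carleson packing condition
$$\sum_{Q\subset Q_0}\beta_{NH}(Q)\,\mathcal H^1(Q)\lesssim \mathcal H^1(Q_0)$$
over a David--Christ dyadic system on $E$. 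The coefficient here is
$$\beta_{NH}(Q)=\frac{1}{\mathcal H^1(Q)^2}\iint_{Q\times 3Q}\frac{NH(p^{-1}q)^4}{\ell(Q)^4}.$$
It measures, in an averaged $L^4$-sense, the failure of pairs of points in $Q$ to be horizontally aligned.

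\textbf{Step 2: from non-horizontality to $\beta$-numbers.} The key geometric step is to show that $\beta_{NH}(Q)$ (with slightly enlarged cubes) dominates a Heisenberg $\beta$-number. The target is the $L^4$ quantity of Li--Schul / Fässler--Orponen type, $\beta_{4}(Q)^4$, measuring distance to best approximating horizontal lines. Its Carleson packing is known to imply (the Heisenberg analogue of Jones' traveling salesman theorem, due to Li and Schul, in the Ahlfors regular form) that $E$ lies in a $1$-regular curve; we use this as the final black box. The comparison goes by contradiction and compactness. Suppose $NH(p^{-1}q)$ is small for most pairs $p,q\in 3Q$, relative to $\ell(Q)$. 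Fix three well-separated points $p_1,p_2,p_3$, which exist by regularity. Vanishing $z$-coordinates of $p_i^{-1}p_j$ force the projections $\pi(p_i)$ to the plane to be such that the signed area $\tfrac12 (x_iy_j-y_ix_j)$-type terms cancel. From this we deduce that most points lie close to a single horizontal line through $p_1$. The reason is that the set $\{q: NH(p^{-1}q)=0\}$ is the horizontal plane at $p$, and the intersection of horizontal planes at two non-collinear-in-projection points is a single horizontal line. Quantitatively, we use the exact formula
$$z\text{-coordinate of }p^{-1}q = z'-z-\tfrac12(xy'-yx')$$
and average over $p$ in a positive-measure set. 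The output is
$$\beta_4(Q)^4\lesssim \beta_{NH}(\lambda Q)$$
with a fixed dilation $\lambda$.

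\textbf{Main obstacle.} The hardest part is the quantitative Step 2. A point $q$ can lie in the horizontal plane of one $p$ without being near the line. The argument must therefore use two ``generic'' base points whose horizontal planes intersect transversally at angle bounded below, and control the error linearly. This is where the exponent $4$ matters: the distance of $q$ to the line is controlled by $NH$ values through a square-root relation, since $NH=|z|^{1/2}$ and the deviation from the line enters $z$ linearly times $\ell(Q)$. Hence $\mathrm{dist}^2\lesssim \ell(Q)\,(NH)^2$, and squaring gives
$$\mathrm{dist}^4/\ell(Q)^4\lesssim NH^4/\ell(Q)^4,$$
matching $K_4$ with the $L^4$ $\beta$-numbers. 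I would first attempt a direct pointwise version of this estimate, with compactness as a fallback. Finally, I would sum the packing estimates and invoke the Heisenberg traveling salesman theorem to conclude.
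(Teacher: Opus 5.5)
Your Step 1 matches the paper's first step. Testing $T_\ve$ on $\chi_{B(p_0,R)\cap E}$, using $K_4\ge 0$ and letting $\ve\to 0$ gives $\iint_{(E\cap B(p_0,R))^2}NH(p^{-1}q)^4d(p,q)^{-5}\lesssim \|T\|R$; the constant is $\|T\|$, not $\|T\|^2$. The dyadic packing of $\beta_{NH}$ then follows.

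The pointwise part of Step 2 is also sound once stated correctly. If $p_1=0$ and $p_2=(a,0,c)$ with $|a|\gtrsim\ell(Q)$, then
$|y_q|\lesssim \ell(Q)^{-1}\bigl(NH(p_1^{-1}q)^2+NH(p_2^{-1}q)^2+NH(p_1^{-1}p_2)^2\bigr)$.
Hence $\mathrm{dist}(q,L)\lesssim NH(p_1^{-1}q)+NH(p_2^{-1}q)+NH(p_1^{-1}p_2)$ for the horizontal line $L$ through $p_1$ in direction $e_1$. Your relation $\mathrm{dist}^2\lesssim\ell(Q)NH^2$ is not dimensionally consistent, but Chebyshev still gives $\beta_4(Q)^4\lesssim\beta_{NH}(\lambda Q)$.

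The gap is the black box you invoke at the end. The Heisenberg traveling salesman results of Ferrari--Franchi--Pajot and Li--Schul give sufficiency only with exponents strictly below $4$. The exponent $4$ appears in the \emph{necessary} direction, namely $\sum\beta_\Gamma(Q)^4\diam Q\lesssim\mathcal H^1(\Gamma)$ for curves. As far as I know, this mismatch is why stratified $\beta$-numbers were later introduced. Since $\beta\le 1$, a Carleson packing of $\beta^4$ does not yield one for $\beta^r$ with $r<4$. So ``Carleson packing of (averaged) $\beta_4^4$ implies $E$ lies in a $1$-regular curve'' is not an established result, and your argument stops one step short.

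The loss happens in Step 2. A horizontal deviation $h$ at scale $\ell$ enters $NH^2$ linearly, through an enclosed area of order $h\ell$. So $NH^4/\ell^4$ behaves like $(h/\ell)^2$ plus the fourth power of the vertical deviation, i.e.\ like a squared stratified $\beta$-number. By contrast, $\beta_4^4$ records only $(h/\ell)^4$ horizontally. The inequality $\beta_4^4\lesssim\beta_{NH}$ therefore discards exactly the gain that makes $K_4$ the right kernel.

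To repair your route you would need two further results:
\begin{itemize}
\item a lemma upgrading Carleson packing of $\beta^4$ on Ahlfors regular sets to packing of stratified $\beta$-numbers. This is plausible, since horizontal lines at angle $\theta$ separate at rate $\sqrt\theta$ in $d$, but it has to be proved;
\item a Carleson, regular-curve form of the stratified traveling salesman theorem.
\end{itemize}

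The paper avoids all of this by keeping three points. From the Step 1 bound it integrates $p_3$ over $E\cap B(p_1,\alpha^{-1}d(p_1,p_2))$, which has measure $\lesssim_\alpha d(p_1,p_2)$. This gives $\iiint_{\Sigma(\alpha)\cap(B(p_0,R)\cap E)^3}NH(p_1^{-1}p_2)^4d(p_1,p_2)^{-6}\lesssim R$. The paper then uses
$\gamma_1^2\gamma_2^2/\diam^2\lesssim\sum_{\sigma\in S_3}NH(p_{\sigma(1)}^{-1}p_{\sigma(2)})^4d(p_{\sigma(1)},p_{\sigma(2)})^{-6}$.
With this, \cite[Proposition 4.5]{CL} turns the triple integral into a Carleson bound for Menger curvature on $\Sigma(\alpha)$. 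F\"assler--Violo \cite[Corollary 3.20]{FV} then produces the $1$-regular curve. Menger curvature weighs horizontal deviations quadratically, like stratified $\beta$-numbers, so nothing is lost along the way.
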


The proof of Theorem \ref{mainthm1}, which can be found in Section \ref{sec:mainthm1}, is based on a modification of the argument used in \cite[Theorem 1.3]{CL}. The key difference is that, rather than relying directly on the $L^2(E)$-boundedness of $T^4$, we exploit the fact that $T^4$ is locally $L^1(E)$-bounded on indicator functions, a property that follows from its $L^2(E)$-boundedness. Thus, one may ask if $L^1$-boundedness might be the proper condition to consider, which may make sense given the positive nature of the kernel. In Section \ref{sec:l1unbound} we prove that this is not the case, by showing that there exists a $1$-Ahlfors regular curve for which the SIO associated with $K_4$ is not $L^1$-bounded.

F\"assler and Orponen~\cite{FO1dim} recently performed a comprehensive study of $1$-dimensional kernels in the Heisenberg group and among many other things they showed that $K_4$ defines an $L^2(E)$ bounded operator whenever $E$ is contained in a $1$-Ahlfors regular curve, \cite[Theorem 1.7]{FO1dim}.
Theorem \ref{mainthm1} combined with the aforementioned result from \cite{FO1dim} provides a characterization of uniform $1$-rectifiability in the Heisenberg group. This is the first example of a good kernel beyond Euclidean spaces.
\begin{cor}
\label{mainchar}
Let $E$ be a $1$-Ahlfors regular set. The set $E$ is contained in  a $1$-Ahlfors regular curve if and only if the singular integral associated to $K_4$ and $E$ is bounded in $L^2(E)$.
\end{cor}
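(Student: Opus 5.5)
The corollary has two directions, and I would obtain each from a result already stated in the paper rather than prove anything new.

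For the forward direction, suppose $E$ is contained in a $1$-Ahlfors regular curve $\Gamma$. I would apply \cite[Theorem 1.7]{FO1dim} of F\"assler and Orponen. It states that the SIO associated with $K_4$ is bounded in $L^2(E)$ whenever $E$ lies in such a curve. The one point to check is that their notion of boundedness matches Definition \ref{SIO}. We need the truncations $T^4_\ve$, built with Kor\'anyi balls and the measure $\mathcal{H}^1|_E$, to be bounded uniformly in $\ve$.

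If their result is stated for $\mathcal{H}^1|_\Gamma$ instead, I would restrict to $E$. The measure $\mathcal{H}^1|_E$ is dominated by $\mathcal{H}^1|_\Gamma$. Since $K_4 \ge 0$, for $f\in L^2(E)$ we have $|T^{4}_{\ve,E} f| \le T^4_{\ve,\Gamma}(|f|\chi_E)$ pointwise on $E$. The right-hand side is controlled by the $L^2(\Gamma)$ bound. This gives $L^2(E)$-boundedness with the same constant.

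If their truncations use a different but comparable homogeneous norm, I would handle it as follows. Two such truncations differ by an integral of $|K_4|$ over an annulus $\{c\ve\le \|p^{-1}q\|\le C\ve\}$. By the growth condition \eqref{czgrowth} and Ahlfors regularity \eqref{eq:ADR}, this difference is dominated by a constant times the Hardy--Littlewood maximal function of $f$ on $E$. That maximal function is $L^2(E)$-bounded on the doubling space $(E,\mathcal{H}^1|_E)$. So the two notions of boundedness are equivalent.

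For the reverse direction, suppose the SIO associated with $K_4$ and $E$ is bounded in $L^2(E)$. Then $E$ satisfies exactly the hypotheses of Theorem \ref{mainthm1}. That theorem, assumed proven in Section \ref{sec:mainthm1}, gives that $E$ is contained in a $1$-Ahlfors regular curve.

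Combining the two directions gives the equivalence. The only step requiring any care is the normalization check in the forward direction: matching the class of curves, the metric used for the truncations, and the measure to those in \cite{FO1dim}. By the maximal function comparison and the positivity argument above, this is routine. The real content of the corollary is entirely in Theorem \ref{mainthm1}.
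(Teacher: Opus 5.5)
Your proposal is correct and follows the paper's route: the forward implication is \cite[Theorem 1.7]{FO1dim} of F\"assler and Orponen, and the reverse implication is Theorem \ref{mainthm1}; the paper simply combines the two. Your extra normalization checks (restricting from the curve $\Gamma$ to $E$, and comparing truncations via the maximal function) are routine and valid, though the restriction step does not even need $K_4\ge 0$, since $T^4_{\ve,E}f = T^4_{\ve,\Gamma}(f\chi_E)$ on $E$.
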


F\"assler and Orponen in \cite{FO1dim} proved that any smooth $1$-dimensional kernel in $\He$ which is odd or horizontally odd (i.e. it satisfies $K(-x,-y,z)=-K(x,y,z)$ defines $L^2$-bounded SIOs on any $1$-Ahlfors regular curve. They also asked, \cite[Question 1]{FO1dim}, if for $-1$-homogeneous kernels the cancellation conditions mentioned above (oddness or horizontal oddness) could be relaxed by only assuming that the operator is $L^2$-bounded on horizontal lines with uniform constants. This is a natural question which was partly motivated by the fact that the kernels $K_{\alpha}, \alpha \geq 4,$ define $L^2$-bounded SIOs  on regular curves, while they are neither odd nor horizontally odd. However, as mentioned earlier, the SIOs associated with the kernels $K_\alpha$ are, trivially, $L^2$-bounded on horizontal lines with uniform constants. 

We use again members of  the family \eqref{seankernels} to give a negative answer to this question.
\begin{thm}
\label{FOquestion}
Let $\alpha \in (0,2)$. There exists a $1$-Ahlfors regular curve $E_\alpha$ such that the singular integral associated with $K_\alpha$ and $E_\alpha$ is not bounded in $L^2(E_\alpha)$.
\end{thm}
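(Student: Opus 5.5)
The plan is to exploit the non-negativity of $K_\alpha$. Since $K_\alpha\ge 0$, testing $L^2(E)$-boundedness on indicator functions gives a necessary condition. If $T^\alpha_\ve$ is uniformly bounded, then for a ball $B$ centred on $E$ with $r(B)\le \diam E$,
\[
\int_{E\cap B}\Bigl(\int_{E\cap B\stm B(p,\ve)}K_\alpha(p^{-1}\cdot q)\,d\mathcal H^1(q)\Bigr)^2 d\mathcal H^1(p)\le \|T\|_2^2\,\mathcal H^1(E\cap B)\lesssim r(B).
\]
It therefore suffices to build a $1$-Ahlfors regular curve $E_\alpha$ and a set $A\subset E_\alpha\cap B$ of measure $\gtrsim r(B)$ such that $\int_{E_\alpha\cap B\stm B(p,\ve)}K_\alpha(p^{-1}q)\,d\mathcal H^1(q)\to\infty$ as $\ve\to 0$, uniformly for $p\in A$.

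For the construction I will take $E_\alpha$ to be the horizontal lift of a planar Lipschitz graph $y=f(x)$, $x\in[0,1]$. Such a lift is a Lipschitz curve in $\He$, and its projection to the $x$-axis shows that it is bi-Lipschitz to $[0,1]$, hence $1$-Ahlfors regular. For points $p,q$ on the lift above $a<b$, the $z$-coordinate of $p^{-1}\cdot q$ is the signed area between the chord and the graph over $[a,b]$. Also $\|p^{-1}q\|\approx |b-a|$ as long as that area is $\lesssim |b-a|^2$. I will take the lacunary sum
\[
f(x)=\sum_{k\ge1}\ve_k 2^{-k}\varphi(2^k x),
\]
where $\varphi$ is a fixed $1$-periodic Lipschitz bump (for instance a tent) and $\ve_k=k^{-2/\alpha}$. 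Because $\alpha<2$ we have $2/\alpha>1$, so $\sum\ve_k<\infty$ and $f$ is Lipschitz. Heuristically, for $|b-a|\approx 2^{-k}$ the $k$-th term produces area $\approx\ve_k4^{-k}$. This gives $NH(p^{-1}q)/\|p^{-1}q\|\approx\ve_k^{1/2}$, and hence
\[
K_\alpha(p^{-1}q)\approx \ve_k^{\alpha/2}2^{k}=k^{-1}2^k .
\]
Summing over dyadic annuli $2^{-k-1}\le d(p,q)\le 2^{-k}$, each of measure $\approx 2^{-k}$, would give $\int_{E\cap B\stm B(p,2^{-N})}K_\alpha\gtrsim\sum_{k\le N}k^{-1}\approx\log N\to\infty$. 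This contradicts the displayed inequality. This is exactly where $\alpha<2$ enters: we need $\sum\ve_k<\infty$ together with $\sum\ve_k^{\alpha/2}=\infty$.

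The main obstacle is making the area lower bound rigorous. For a positive proportion of pairs $(p,q)$ at distance $\approx 2^{-k}$, and for $p$ ranging over a set of positive measure, I need $|z(p^{-1}q)|\gtrsim\ve_k4^{-k}$. The contributions of the other scales could in principle cancel the $k$-th one.

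Coarser scales $j<k$ are harmless once $\varphi$ is piecewise linear with breakpoints on the dyadic grid. On the dyadic intervals avoiding the breakpoints of $\varphi(2^j\cdot)$, these terms are affine and contribute zero area relative to the chord. Finer scales $j>k$ contribute area at most $\sum_{j>k}\ve_j2^{-j}\cdot 2^{-k}$, which is $\ll \ve_k 4^{-k}$ only if $\ve_j$ decays slowly relative to $2^{j-k}$. Since $\ve_j/\ve_k\le1$, this gives $\lesssim 2^{-k}\cdot\ve_k2^{-k}\cdot\sum_{i\ge1}2^{-i}$, which is comparable to the main term rather than negligible. To fix the constants I would first try a sparse lacunary sequence of scales $k_m=Mm$ with $M$ large and $\ve_{k_m}=m^{-2/\alpha}$. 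This makes the fine-scale error $O(2^{-M})$ times the main term, while the divergence $\sum_m m^{-1}=\infty$ is kept. Then, for each $m$, I would choose $q$ in the same dyadic interval of length $2^{-k_m}$ as $p$ and with $|b-a|\ge 2^{-k_m-2}$; the tent then yields area $\gtrsim\ve_{k_m}4^{-k_m}$. Finally, I would check that $\|p^{-1}q\|\approx|b-a|$ still holds, so that the kernel estimate and the measure of the annuli are as claimed.
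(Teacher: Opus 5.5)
Your strategy is sound and in substance the same as the paper's; only the curve differs. The paper lifts a von Koch--type six-segment replacement curve with angles $\theta_n\sim n^{-2/\alpha}$ and proves $NH(p^{-1}q)^2\gtrsim R_n^2\theta_n$ by a convex-hull and triangle-counting argument. Your lacunary tent sum with sparse scales $k_m=Mm$ makes the area estimate linear and elementary: coarser scales are affine on $[a,b]$, and finer ones cost $O(2^{-M})$ of the main term.

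There is, however, a genuine error in the final step, and it sinks the reduction you chose. On a dyadic interval $I$ of length $2^{-k_m}$, the term $f_m=\varepsilon_m2^{-k_m}\varphi(2^{k_m}\cdot)$ consists of two affine pieces meeting at the apex $c_I$. For $a<c_I<b$ in $I$, the area it encloses with its chord is exactly $\varepsilon_m(c_I-a)(b-c_I)$. So taking $q$ in the same dyadic interval with $|b-a|\ge 2^{-k_m-2}$ does \emph{not} give area $\gtrsim\varepsilon_m4^{-k_m}$:
\begin{itemize}
\item If $a,b$ lie on the same side of $c_I$, which is compatible with $|b-a|\ge 2^{-k_m-2}$, scale $m$ contributes nothing and only the fine-scale error remains.
\item If $a$ is within $\delta 2^{-k_m}$ of $c_I$, every $b\in I$ receives at most $\delta\varepsilon_m4^{-k_m}$ from scale $m$.
\end{itemize}
What you need is that $a,b$ straddle the apex at distance $\gtrsim 2^{-k_m}$, e.g.\ $a$ in the first quarter $Q_1(I)$ and $b$ in the last quarter $Q_4(I)$. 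That is a condition on $p$ that depends on $m$ (on two binary digits of $a$). A fixed $p$ therefore gains $\gtrsim m^{-1}$ only at some scales. The divergence \emph{uniformly for $p$ in a set $A$} that you reduced to is not delivered; you would need an independence or three-series argument plus Egorov to get it.

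The simple repair is to average over $p$ as well, which is exactly what the paper does.
\begin{itemize}
\item \emph{Upper bound.} Cauchy--Schwarz applied to your displayed inequality, or directly $\langle T_\varepsilon\chi_E,\chi_E\rangle\le\|T\|_2\,\mathcal{H}^1(E)$, together with monotone convergence, shows that boundedness forces $\iint K_\alpha(p^{-1}q)\,dq\,dp<\infty$.
\item \emph{Lower bound.} In the $x$-parametrization, set $P_m=\bigcup_I Q_1(I)\times Q_4(I)$, with $I$ ranging over dyadic intervals of length $2^{-k_m}$. On $P_m$ one has $b-a\in[2^{-k_m-1},2^{-k_m}]$, so the $P_m$ are pairwise disjoint once $M\ge 2$. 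Each has measure $2^{-k_m}/16$.
\item \emph{Area and kernel.} On $P_m$ the enclosed area is $\ge\varepsilon_m4^{-k_m}/32$ once $M\ge 5$, since the fine-scale error is at most $\varepsilon_m4^{-k_m}2^{-M-1}/(1-2^{-M})$. Hence $K_\alpha\gtrsim m^{-1}2^{k_m}$ on $P_m$.
\item \emph{Conclusion.} Summing over $m$ gives $\gtrsim\sum_m m^{-1}=\infty$.
\end{itemize}
Your sets $P_m$ play precisely the role of the paper's pair sets $F_n\times E_{n,p}$ (pairs with $p_n=1$, $q_n=4$), which likewise place $p$ and $q$ on opposite sides of the stage-$n$ bump.
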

The curves $E_a$ are constructed as horizontal lifts of von Koch-like curves in the plane, similarly to \cite{seanmarkov}. While it may be possible to extend the result to $\alpha \in [2,4)$, restricting to $\alpha < 2$ allows us to take $E_\alpha$ to be lifts of Lipschitz graphs, which drastically simplifies the proof. The proof of Theorem \ref{FOquestion} can be found in Section \ref{sec:FOquestion}.

We finally consider the kernel \begin{equation}
\label{eq:kerunrec}
K_b(x,y,z)=\frac{|x|}{\|(x,y,z)\|^2}.
\end{equation}Note that the kernel $K_b$ is $-1$-homogeneous and it follows as in \cite[Proposition 2.5]{CLZh} that it is a $1$-CZ kernel.
We will show that the SIO associated with $K_b$ behaves very differently from the SIO associated with $K_4$. For the next theorem, recall that a set $E \subset \He$ is called purely $1$-unrectifiable if $\mathcal{H}^1(E \cap f(\R))=0$ for any Lipschitz function $f:\R \to \He$.
\begin{thm}\label{mainthm2}
There exists a $1$-Ahlfors regular and  purely $1$-unrectifiable set  $E$ such that the singular integral associated with $K_b$ and $E$ is bounded in $L^2(E)$, i.e., the truncated singular integrals
$$T^b_\ve f\,(p):=\int_{E \stm B(p,\ve)} K_b(p^{-1} \cdot q) f(q) d \mathcal{H}^1(q)$$ are uniformly bounded in $L^2(E)$. Moreover, for every $p \in E$, we have $(p^{-1}E\backslash \{0\}) \subset \{K_b\neq 0\}$.
\end{thm}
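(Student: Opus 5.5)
The plan is to build $E$ as a small perturbation, in the $x$-direction, of a self-similar Cantor set lying in the vertical plane $\{x=0\}$. The positivity of $K_b$ reduces $L^2$-boundedness to a uniform $L^1$-type estimate. By cancelling a factor of $|x|$, that estimate becomes a statement about how fast $x$-displacements between points of $E$ decay relative to their Heisenberg distance.

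\textbf{Step 1 (reduction via Schur's test).} Write $p^{-1}\cdot q=(x_q-x_p,\,\ast,\,\ast)$. Since $q^{-1}\cdot p=(p^{-1}\cdot q)^{-1}$ is the point reflection of $p^{-1}\cdot q$, we have $K_b(p^{-1}\cdot q)=K_b(q^{-1}\cdot p)$, so the kernel is symmetric and nonnegative. By Schur's test it then suffices to prove
\begin{equation*}
\sup_{p\in E}\int_E \frac{|x_q-x_p|}{d(p,q)^2}\,d\mathcal H^1(q)<\infty ,
\end{equation*}
because this bounds every truncation $T^b_\ve$ uniformly in $\ve$. Split the integral over the dyadic annuli $d(p,q)\sim 2^{-k}$. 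Ahlfors regularity then reduces the claim to the following geometric condition: $|x_q-x_p|\lesssim d(p,q)^{1+\delta}$ for some $\delta>0$ and all $p,q\in E$ with $d(p,q)\le 1$. Indeed, each annulus then contributes $\lesssim 2^{-k\delta}$, and the sum over $k$ is geometric.

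\textbf{Step 2 (construction).} In the abelian plane $\{x=0\}$ the group law is Euclidean addition and the metric is $((y^2)^2+z^2)^{1/4}$. Fix there a self-similar Cantor set $C_0$ generated by $4$ maps of the form $\delta_{1/4}(\cdot)+v_i$, with translations $v_i$ chosen so that the pieces are well separated and at least two of them differ in the $z$-coordinate. For instance, take the product of a $2$-adic Cantor set in $y$ and a $4$-adic Cantor set in $z$ matching the parabolic scaling. Then $C_0$ is $1$-Ahlfors regular. Next, code points by words $\omega\in\{1,2,3,4\}^{\N}$ and set
\begin{equation*}
x(\omega)=\sum_k a_{\omega_k}\,4^{-k(1+\delta)}
\end{equation*}
for distinct small digits $a_i$. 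Define $E=\{(x(\omega),y(\omega),z(\omega))\}$, where $(0,y(\omega),z(\omega))\in C_0$.

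Choose the $a_i$ and $\delta$ so that the $x$-map is injective. This gives $x_q\neq x_p$ for $p\neq q$, hence $p^{-1}E\setminus\{0\}\subset\{K_b\neq0\}$. If two words first differ at level $k$, then $|x_q-x_p|\sim 4^{-k(1+\delta)}$, while the corresponding points of $C_0$ are at distance $\sim 4^{-k}$.

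\textbf{Step 3 (metric comparison).} Adding $x$-coordinates perturbs $p^{-1}\cdot q$ in two ways. Its first coordinate changes by $|\Delta x|\le r^{1+\delta}$, and its third coordinate changes by $\tfrac12|x_p\Delta y - y_p\Delta x|$, where $r\sim 4^{-k}$ is the scale at which $p$ and $q$ separate. Separating the words at a large level first, I expect both perturbations to be $o(r)$ and $o(r^2)$ respectively, so that $d(p,q)$ is comparable to the distance of the unperturbed points in $C_0$.

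The third-coordinate term is the step I expect to be the main obstacle. The contribution $y_p\Delta x$ is harmless, being $\lesssim r^{1+\delta}$, but the contribution $x_p\Delta y$ is not automatically $o(r^2)$. If it is not, I would recentre: by left-invariance, replace the unperturbed comparison set by its image under left translation by $(x_p,0,0)$, which only shears $z$ affinely in $y$. I would then check that the separation of pieces survives this shear at every scale, by taking $\delta$ small and the separation of the $v_i$ large. This comparison, together with Ahlfors regularity inherited from $C_0$ and the bound $|\Delta x|\lesssim d^{1+\delta}$, completes Step 1.

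\textbf{Step 4 (pure unrectifiability).} Suppose $\mathcal H^1(E\cap\gamma(\R))>0$ for some Lipschitz $\gamma$. At a density point, Pansu's theorem forces every tangent of $E$, obtained by blowing up with $\delta_{1/r}$ and left translation, to lie in a horizontal line. On the other hand, the $x$-displacements scale like $r^{1+\delta}/r\to0$, so tangents of $E$ are tangents of the unperturbed structure of $C_0$. By self-similarity these are again copies of $C_0$ (up to the shear from Step 3), and they contain pairs of points with purely vertical separation. Such pairs do not lie on a horizontal line, which is a contradiction.
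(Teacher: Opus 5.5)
There is a genuine gap in Step 3, and it breaks the construction as you set it up: as long as $C_0$ genuinely varies in $y$, your $E$ is not $1$-Ahlfors regular. The third coordinate of $p^{-1}\cdot q$ is $\Delta z+\tfrac12(y_p\Delta x-x_p\Delta y)$. It enters the Kor\'anyi norm through a square root, so a perturbation of it is negligible at scale $r\sim 4^{-k}$ only if it is $O(r^2)$. Neither twist term is.
\begin{itemize}
\item The term $y_p\Delta x$ has size $|y_p|\,r^{1+\delta}\gg r^2$ for $\delta<1$. So it is not harmless, and taking $\delta$ small makes it worse.
\item The term $x_p\Delta y$ has size $|x_p|\,r$, because $x_p$ is of the size of your digits at every scale.
\end{itemize}
Concretely, pass to a cylinder on which $|x|\ge c>0$; one exists since $x$ is continuous and injective. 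Inside it, fix all remaining $z$-digits. The resulting fibre is a two-branch Cantor set with $\Delta z=0$. Two of its points first differing at a large level $j$ satisfy $d(q,q')\gtrsim (c\,4^{-j})^{1/2}\sim 2^{-j}$, not $\lesssim 4^{-j}$. The mass distribution principle then gives every such fibre $\cH^1$-measure bounded below uniformly. There are infinitely many disjoint fibres, so $\cH^1(E)=\infty$ although $E$ is bounded.

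Your recentring cannot repair this. Left translation by $(x_p,0,0)$ is an isometry, but what actually approximates $E$ near $p$ is a left translate of the sheared set $\{(0,y,z-\tfrac12 x_p y)\}$. The shear $(y,z)\mapsto(y,z-cy)$ is not bi-Lipschitz for $(y^4+z^2)^{1/4}$; it is exactly what turns $|\Delta y|$ into $|c\,\Delta y|^{1/2}$. Separation of pieces survives the shear, but the upper bounds on diameters, which the upper Ahlfors bound needs, do not.

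The twist has to be removed altogether by putting the base set on the $z$-axis ($y\equiv 0$). Then $E$ lies in the abelian subgroup $\{y=0\}$, where $d(p,q)=((\Delta x)^4+(\Delta z)^2)^{1/4}$ exactly, and Step 3 becomes trivial. The paper does this with no coding at all: $E=\{(t,0,t):t\in S\}$ for an Ahlfors $\tfrac12$-regular $S\subset[0,1]$.
\begin{itemize}
\item $d(p,q)\asymp |s-t|^{1/2}$ gives $1$-regularity.
\item $|\Delta x|=|s-t|\lesssim d(p,q)^2$, which is your condition with $\delta=1$, makes $K_b(p^{-1}q)\lesssim 1$ on $E\times E$. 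Hence $\sup_p\int_E K_b(p^{-1}q)\,d\cH^1(q)\lesssim \cH^1(E)<\infty$.
\item $\Delta x\neq 0$ for $p\neq q$ gives the support statement.
\end{itemize}
From there your Step 1 (Schur's test, where the paper invokes the $T1$ theorem) finishes the boundedness. Your Step 4 reduces to the observation that blow-ups of $E$ collapse onto the $z$-axis, which contains no horizontal direction.
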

The proof of Theorem \ref{mainthm2} can be found in the last section of our paper; Section \ref{sec:mainthm2}. We note that, due to the characteristics of $\He$, the set $E$ is obtained in a rather simple way. We also note that it is a much harder problem to construct such sets in the Euclidean case, see \cite{JN} and \cite{huo}.



\section{A SIO whose $L^2$ boundedness characterizes uniform $1$-rectifiability.} \label{sec:mainthm1}

\begin{proof}[Proof of Theorem \ref{mainthm1}]For $\alpha \in (0,1)$ and $r > 0$, we let $\Sigma(\alpha,r)$ to be the set of triples of points $(p_1,p_2,p_3) \in E$ so that
\begin{align*}
  \alpha r \leq d(p_i,p_j) \leq r, \qquad \forall i \neq j.
\end{align*}
We also let $\Sigma(\alpha) = \bigcup_{r > 0} \Sigma(\alpha,r)$. For $x,y \in E$, we let
\begin{align*}
  \Sigma(\alpha,r;x) &:= \{(y,z) \in E^2 : (x,y,z) \in \Sigma(\alpha,r)\}, \\
  \Sigma(\alpha;x,y) &:= \{z \in E : (x,y,z) \in \Sigma(\alpha)\}.
\end{align*} The \textit{Menger curvature } of  $p_1,p_2,p_3$ in $E$, is denoted by $c(p_1,p_2,p_3) \in \R$ and is defined as
\begin{align*}
  c(p_1,p_2,p_3) = \frac{1}{R},
\end{align*}
where $R$ is the radius of the circle in $\R^2$ passing through a triangle defined by the vertices $p_1',p_2',p_3' \in \R^2$ where $d(p_i,p_j) = |p_i' - p_j'|$. 

By a recent result of F\"assler and Violo, \cite[Corollary 3.20]{FV} it suffices to prove that there is some $\alpha > 0$ so that
$$\iiint_{\Sigma(\alpha) \cap (B(p_0,R) \cap E)^3} c(p_1,p_2,p_3)^2 ~dp_1 ~dp_2 ~dp_3 \lesssim_{\alpha} R, \qquad \forall p_0 \in E, R > 0.$$
We would like to point out that in \cite{CL} the two first named authors had mistakenly attributed the previous result to Hahlomaa, from \cite{hah}. Although, it might have been known to some people, see for example \cite{schul}, the proof was nowhere published before the work of F\"assler and Violo.

By \cite[Proposition 4.5]{CL} it is enough to show that there is some $\alpha > 0$ so that
\begin{equation}
\label{eqn:cl-prop4.5}
\iiint_{\Sigma(\alpha)\cap (B(p_0,R)\cap E)^3} \frac{\gamma_1(p_1, p_2, p_3)^2 \gamma_2(p_1, p_2, p_3)^2}{\diam (p_1, p_2, p_3)^2} \,dp_1\,dp_2\,dp_3\lesssim_{\alpha} R, \qquad \forall p_0 \in E, R > 0.
\end{equation}
We will actually show that \eqref{eqn:cl-prop4.5} holds  for any $\alpha \in (0,1)$. For simplicity of notation we let $T^4:=T$ and $\|T^4\|_2=\|T\|$. Denote
$$
\chi_R = \chi_{B(p_0,R)\cap E}.
$$
We have
$$
\langle T_\ve \chi_R, \chi_R\rangle_{L^2(E)} \leq \|T_\ve\|_{2\to 2} \|\chi_R\|_{L^2(E)}^2 \lesssim_{\|T\|} R.
$$
Thus
\begin{multline}
\int_R T_\epsilon \chi_R(p) ~dp = \langle T_\ve \chi_R, \chi_R\rangle_{L^2(E)} = \int_{E\cap B(p_0,R)} \int_{E\cap B(p_0, R)\cap B(p_1, \ve)^c} \frac{NH(p_1^{-1} p_2)^4}{d(p_1, p_2)^5}\,dp_2\,dp_1 \\ \lesssim_{\|T\|} R.
\label{eq:L1?}
\end{multline}
Let $\ve \to 0$, by Fatou's lemma,
$$
\iint_{(E\cap B(p_0,R))^2} \frac{NH(p_1^{-1} p_2)^4}{d(p_1, p_2)^5}\,dp_2\,dp_1 \lesssim_{\|T\|} R.
$$
Let $\alpha \in (0,1)$ and note that:
\begin{align*}
\left|\Sigma(\alpha;x p_1, p_2) \cap E\right| &\leq \left|\left\{p_3\in E: d(p_3, p_1)\leq \frac{1}{\alpha} d(p_1, p_2)\right\}\right| =\left|E\cap B\left(p_1, \frac{1}{\alpha} d(p_1, p_2)\right)\right|\lesssim_{ \alpha} d(p_1, p_2).
\end{align*}
Therefore,
\begin{align*}
\iiint_{\Sigma(\alpha)\cap (B(p_0,R)\cap E)^3} &\frac{NH(p_1^{-1} p_2)^4}{d(p_1, p_2)^6}\,dp_1 \,dp_2\,dp_3 \\
&\leq \iint_{(E\cap B(p_0,R))^2} \frac{NH(p_1^{-1} p_2)^4}{d(p_1, p_2)^5} \int_{E\cap \Sigma(\alpha, p_1, p_2)} \frac{dp_3}{d(p_1, p_2)}\,dp_1\,dp_2\\
&\lesssim_{ \alpha} \iint_{(E\cap B(p_0,R))^2} \frac{NH(p_1^{-1} p_2)^4}{d(p_1, p_2)^5}\,dp_1\,dp_2\\
&\lesssim_{\|T\|} R.
\end{align*}
Hence,
\begin{align*}
\iiint_{\Sigma(\alpha)\cap (B(p_0,R)\cap E)^3} &\frac{\gamma_1(p_1, p_2, p_3)^2 \gamma_2(p_1, p_2, p_3)^2}{\diam (p_1, p_2, p_3)^2} \,dp_1\,dp_2\,dp_3\\
&\lesssim \sum_{\sigma \in S_3} \iiint_{\Sigma(\alpha)\cap (B(p_0,R)\cap E)^3} \frac{NH(p_{\sigma(1)}^{-1}p_{\sigma(2)})^4}{d(p_{\sigma(1)},p_{\sigma(2)})^6} \,dp_1\,dp_2\,dp_3\\
&\lesssim_{ \|T\|, \alpha} R.
\end{align*}
\end{proof}

\section{$L^1$ unboundedness}
\label{sec:l1unbound}
The proof of Theorem \ref{mainthm1} essentially uses a local $L^1$-boundedness property of the operator rather than $L^2$-boundedness. Indeed, this is most apparent in \eqref{eq:L1?}. So, one could ask if $T^4$ is $L^1$-bounded on $1$-Ahlfors regular curves. In this section we prove that this is not the case, as there is a 1-Ahlfors regular curve for which the SIO associated with $K_4$ is not $L^1$-bounded.

Let $\gamma$ be the horizontal lift of the curve $t \mapsto (t,t \sin \log t), t>0$. First note that
$$\left| \frac{d}{dt} (t \sin \log t) \right| = \left| \sin \log t + \cos \log t \right| \leq 2,$$
so $\gamma$ is the lift of a Lipschitz graph and is indeed Lipschitz and 1-regular. Also note that $\|\gamma(s)^{-1}\gamma(t)\| \asymp |t-s|$.

\begin{prop}
  For any interval $I$,
  \begin{align*}
    \int_\R \int_I K_4(\gamma(s)^{-1}\gamma(t)) ~ds ~dt = \infty.
  \end{align*}
\end{prop}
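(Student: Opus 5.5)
The plan is to fix $s$ in $I$ (which we may shrink to a compact subinterval of $(0,\infty)$, so $I\subset[a,b]$ with $0<a<b$). We then show that $t\mapsto K_4(\gamma(s)^{-1}\gamma(t))$ is bounded below by $c/t$ on a set of $t$ that has positive logarithmic density near $+\infty$. The $t$-integral is then infinite for every $s\in I$, and Tonelli gives the claim since $|I|>0$.

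\emph{Step 1: the vertical component.} Write $\gamma(t)=(x(t),y(t),z(t))$ with $x(t)=t$ and $y(t)=t\sin\log t$. Horizontality means $z'=\tfrac12(xy'-yx')$. By \eqref{eq:Heis}, the third coordinate of $\gamma(s)^{-1}\gamma(t)$ is
\[
A(s,t)=\tfrac12\int_s^t (xy'-yx')\,d\tau-\tfrac12\bigl(x(s)y(t)-y(s)x(t)\bigr),
\]
which is the signed area enclosed by the planar arc and its chord. A direct computation gives $xy'-yx'=\tau\cos\log\tau$. Hence
\[
\tfrac12\int_0^t\tau\cos\log\tau\,d\tau=\tfrac{t^2}{10}\bigl(2\cos\log t+\sin\log t\bigr)=:t^2 g(t),
\]
where $g$ is continuous, $2\pi$-periodic in $\log t$, and not identically zero. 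For $s\in[a,b]$ and $t\ge 2b$, both the integral over $[0,s]$ and the chord term are $O_b(t)$. This is because $|x|,|y|\le \tau$ and $|xy'-yx'|\le\tau$, so
\[
A(s,t)=t^2g(t)+O_b(t).
\]

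\emph{Step 2: lower bound for the kernel.} Since $\|\gamma(s)^{-1}\gamma(t)\|\asymp|t-s|\asymp t$ for $t\ge 2b$, we get
\[
K_4(\gamma(s)^{-1}\gamma(t))=\frac{|A(s,t)|^2}{\|\gamma(s)^{-1}\gamma(t)\|^5}\gtrsim\frac{|A(s,t)|^2}{t^5}.
\]
Let $G=\{t: |g(t)|\ge c_0\}$ for a small $c_0>0$. On $G\cap[C_b,\infty)$ with $C_b$ large, Step 1 gives $|A(s,t)|\ge c_0t^2/2$, so $K_4\gtrsim c_0^2/t$ there.

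\emph{Step 3: divergence.} The set $G$ is invariant under $t\mapsto e^{2\pi}t$. Its intersection with each period $[e^{2\pi k},e^{2\pi(k+1)}]$ has fixed positive $dt/t$-measure $\mu>0$, by continuity and nontriviality of $g$. Summing over $k\ge k_0$ gives
\[
\int_{G\cap[C_b,\infty)}\frac{dt}{t}=\sum_{k\ge k_0}\mu=\infty,
\]
uniformly in $s\in I$. Integrating over $s\in I$ finishes the proof.

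The only step needing care is Step 1: the error bound $O_b(t)$ must be uniform in $s\in I$, and the explicit antiderivative must be nonvanishing on a set of positive logarithmic measure. Both are elementary once the area formula is written down, so I expect no real obstacle. The divergence is only logarithmic, which is precisely why the local $L^2$-type bound in the proof of Theorem \ref{mainthm1} survives while $L^1$-boundedness fails.
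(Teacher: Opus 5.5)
Your argument is correct and follows the same route as the paper: Tonelli reduces to a fixed $s$, the area formula shows $NH(\gamma(s)^{-1}\gamma(t))^2\gtrsim t^2$ on a fixed window of $\log t$ in each period of length $2\pi$, and together with $\|\gamma(s)^{-1}\gamma(t)\|\asymp t$ this gives $K_4\gtrsim 1/t$ on a set of infinite $dt/t$-measure. The differences are only in bookkeeping: you use the explicit antiderivative $\tfrac{t^2}{10}(2\cos\log t+\sin\log t)$ and keep the chord term $-\tfrac12\bigl(x(s)y(t)-y(s)x(t)\bigr)$, whereas the paper estimates $\int e^{2u}\cos(u-\tfrac{\pi}{4})\,du$ by hand on the windows $\log t\in[2\pi n+\tfrac{\pi}{2},2\pi n+\tfrac{3\pi}{4}]$, and your integrand $\tau\cos\log\tau$ is in fact the correct one.
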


\begin{proof}
  By Tonelli, it suffices to prove for any $s$ that
  \begin{align*}
    \int_\R K_4(\gamma(s)^{-1}\gamma(t)) ~dt = \infty.
  \end{align*}
  We may assume without loss of generality that $s > 0$.

  By the area form, we have
  \begin{multline*}
    NH(\gamma(s)^{-1}\gamma(t))^2 = \frac{1}{2}\int_s^t ~x ~dy - y ~dx = \frac{1}{2} \int_s^t x \left( \cos \log x - \sin \log x \right) ~dx \\
    = \frac{1}{\sqrt{8}} \int_s^t x \cos \left( \log x - \frac{\pi}{4} \right) ~dx = \frac{1}{\sqrt{8}} \int_{\log s}^{\log t} e^{2u} \cos \left( u - \frac{\pi}{4} \right) ~du.
  \end{multline*}
  For any $n \in \N$, we have that $\cos (u - \tfrac{\pi}{4}) \geq \frac{1}{\sqrt{2}}$ for all $u \in [2\pi n, 2 \pi n + \tfrac{\pi}{2}]$. Thus,
  \begin{align*}
    \int^{2\pi n + \frac{\pi}{2}}_{2\pi n-\frac{\pi}{4}} e^{2u} \cos \left( u - \frac{\pi}{4} \right) ~du \geq \int^{2\pi n + \frac{\pi}{2}}_{2\pi n} e^{2u} \cos \left( u - \frac{\pi}{4} \right) ~du  \geq e^{4 \pi n} \cdot \frac{1}{\sqrt{2}} \cdot \frac{\pi}{2}.
  \end{align*}
  On the other hand, for any $0 \leq z < 2 \pi n-\frac{\pi}{4}$, we have
  \begin{align*}
    \left| \int_z^{2\pi n - \frac{\pi}{4}} e^{2u} \cos \left( u - \frac{\pi}{4} \right) ~du \right| \leq \int_0^{2\pi n} e^{2u} ~du \leq \frac{1}{2}e^{4\pi n}.
  \end{align*}
  This gives for $c = \frac{\pi}{\sqrt{8}} - \frac{1}{2} > 0$ that
  \begin{align*}
    \int_{e^s}^{z}  e^{2u} \cos \left( u - \frac{\pi}{4} \right) ~du \geq c e^{4\pi n}, \qquad \forall z \in \left[ 2 \pi n + \tfrac{\pi}{2}, 2\pi n + \tfrac{3\pi}{4} \right].
  \end{align*}
  when $n$ is sufficiently large. Going back to the $x$ domain, this gives
  \begin{align*}
    NH(\gamma(s)^{-1}\gamma(t))^2 = \int_{s}^t  x \cos \left( \log x - \frac{\pi}{4} \right) ~dx \geq c e^{4\pi n}, \qquad \forall t \in I_n := \left[ e^{2 \pi n + \tfrac{\pi}{2}}, e^{2\pi n + \tfrac{3\pi}{4}} \right].
  \end{align*}
  As $\|\gamma(s)^{-1} \gamma(t)\| \asymp |s-t| \asymp e^{2\pi n}$ for $t \in I_n$ when $n \geq N$ for some $N$ sufficiently large, we now get
  \begin{align*}
    K_4(\gamma(s)^{-1}\gamma(t)) = \frac{NH(\gamma(s)^{-1}\gamma(t))^4}{\|\gamma(s)^{-1}\gamma(t)\|^5} \gtrsim \frac{c^2e^{8\pi n}}{e^{10 \pi n}} \gtrsim e^{-2\pi n}, \qquad \forall t \in I_n.
  \end{align*}
  Finally, as $|I_n| \gtrsim e^{2\pi n}$, we have
  \begin{align*}
    \int_s^\infty K_4(\gamma(s)^{-1}\gamma(t)) ~dt \geq \sum_{n \geq N} \int_{I_n} K_4(\gamma(s)^{-1}\gamma(t)) ~dt \gtrsim \sum_{n \geq N} e^{-2\pi n} |I_n| \gtrsim \sum_{n \geq N} 1 = \infty.
  \end{align*}
\end{proof}

\section{A SIO which is not $L^2$-bounded on a regular curve} \label{sec:FOquestion}

This section gives a negative answer to a question of F\"assler and Orponen: \cite[Question 1]{FO1dim}. We will show that there exists a $1$-dimensional CZ kernel such that the corresponding singular integral is $L^2$-bounded along horizontal lines with uniform constants, but the singular integral along a certain $1$-regular curve $E$ is not $L^2$- bounded.

The relevant kernels will again come from the family of kernels \eqref{seankernels}. More precisely, we will show for any $\alpha<2$ there exists a $1$-regular curve $E_{\alpha}$, such that the SIO associated to $K_{a}$ (as in \eqref{seankernels}) and $E$ is not bounded on $L^2(E)$.

\begin{center}
\begin{tikzpicture}[scale=1]
\draw (-10.5,0) -- (2.5,0);
\draw (-10.1, 0) -- (-9.7,0.15) --(-9.25, 0) -- (-8.8, -0.15) -- (-8.4, 0);
\draw (0.4,0) -- (0.8, 0.15) -- (1.25, 0) -- (1.7, -0.15) -- (2.1, 0);
\draw (-7.7,0.12) node[anchor=west]{\scriptsize $\theta_n$};
\draw (-7.67,0.17) -- (-7.42, 0.45) -- (-7,0.5) -- (-6.58, 0.55) -- (-6.33,0.83);
\draw (-7.6, 0.7) node[anchor=north]{\scriptsize $\theta_{n+1}$};
\draw (-5.67,0.83) -- (-5.25, 0.78) -- (-5, 0.5) -- (-4.75, 0.22) -- (-4.33, 0.17);
\draw (-3.67,-0.17) -- (-3.25, -0.22) -- (-3, -0.5) -- (-2.75, -0.78) -- (-2.33,-0.83);
\draw (-1.67,-0.83) -- (-1.42, -0.57) -- (-1,-0.5) -- (-0.58, -0.43) -- (-0.33,-0.17);
\draw (-8,0) -- (-6,1) -- (-4,0)  -- (-2,-1) -- (0,0);
\draw (-4,-1) node[anchor=north]{\small FIGURE 1: Stage $n, (n+1)$ polygonal replacements of a stage $(n-1)$ segment in $\mathbb{C}$};
\end{tikzpicture}
\end{center}

The curves $E_{\alpha}$ will be obtained as a lift in $\mathbb{H}^1$ of a von Koch-like curve $\Gamma\subseteq \mathbb{R}^2=\mathbb{C}$ similar to the graph which was constructed in \cite{seanmarkov}. We now describe the iterative construction:
\begin{enumerate}
    \item 
    Without loss of generality, start with a horizontal segment in the plane (call it stage $0$): a segment $J_0$ on the $x$-axis, parametrized from left to right. 

    \item Let $(\theta_n)$ be a decreasing sequence of positive numbers such that
\begin{equation}\label{3}
\sum_n \theta_n<\frac{1}{2}.
\end{equation}

    \item 
    At stage $n\geq 1$, each segment of the previous polygonal curve in $\mathbb{C}$ is replaced by a small six-segment “bump” (a $6$–gonal replacement). For example, given a line segment from $(0,0)$ to $(x,y)$, we can replace it with the piecewise linear curve in $\mathbb{C}$ connecting the following points:

\begin{equation}\label{horizontal}
\begin{aligned}
(0,0), \quad \left(\frac{x}{2+4\cos \theta_n}, \frac{y}{2+4\cos \theta_n} \right), \\ \left(\frac{1+\cos \theta_n}{2+4\cos \theta_n}x-\frac{\sin \theta_n}{2+4\cos \theta_n} y, \frac{1+\cos \theta_n}{2+4\cos \theta_n}y + \frac{\sin \theta_n}{2+4\cos \theta_n}x\right), \quad \left(\frac{x}{2}, \frac{y}{2}\right), \\
\left(\frac{1+3\cos \theta_n}{2+4\cos \theta_n}x+ \frac{\sin \theta_n}{2+4\cos \theta_n}y, \frac{1+3\cos \theta_n}{2+4\cos \theta_n}y - \frac{\sin \theta_n}{2+4\cos \theta_n}x\right), \\
\left( \frac{1+4\cos \theta_n}{2+4\cos \theta_n}x, \frac{1+4\cos \theta_n}{2+4\cos \theta_n}y \right), \quad (x,y),
\end{aligned}
\end{equation}
where $\theta_n>0$ is the angle between the second segment of the polygonal curve and the direction $(x,y)$. Note that each of the six segments has the same length.


Denote each piece of stage $n$ in $\mathbb{C}$ as $J_{n,i}, i=1,\dots, 6^n,$ and denote its length in $\mathbb{C}$ by $R_n$. We have $R_n=R_0\cdot \prod_{j=1}^n \frac{1}{2+4\cos \theta_j}$. Let $J_n:=\cup_{i=1}^{6^n} J_{n,i} $. The planar limit curve $\Gamma:=\Gamma((\theta_n))$ is a continuous curve obtained as the uniform limit of the polygonal approximants $J_n$. We will need the following lemma.

\begin{lem}\label{gamma}
$\Gamma$ is a Lipschitz graph.
\end{lem}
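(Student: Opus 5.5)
We will show that every segment of every approximant $J_n$ makes an angle with the $x$-axis of at most $\sum_{j\le n}\theta_j<\tfrac12<\tfrac{\pi}{2}$. From this we will deduce that each $J_n$ is the graph of a function $f_n$ over the fixed interval $J_0$, with uniformly bounded Lipschitz constant. Passing to the uniform limit then gives the claim.

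\textbf{Step 1: angle bookkeeping.} In the replacement \eqref{horizontal}, the six new segments point in directions obtained from the direction of the old segment by rotating through $0,+\theta_n,-\theta_n,-\theta_n,+\theta_n,0$. This is read off from the vertex list: the second vertex is the first plus $\frac{1}{2+4\cos\theta_n}(x,y)$ rotated by $\theta_n$, and similarly for the others. All six segments have length $R_{n-1}/(2+4\cos\theta_n)$. The horizontal projections of the six new pieces add up to the projection of the old segment, since the endpoints are preserved.

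By induction, any segment $J_{n,i}$ has direction angle $\phi_{n,i}=\sum_{j=1}^n \epsilon_j\theta_j$ with $\epsilon_j\in\{-1,0,1\}$. Hence
\[
|\phi_{n,i}|\le \sum_j\theta_j<\tfrac12,
\]
using \eqref{3}.

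\textbf{Step 2: each $J_n$ is a graph, uniformly Lipschitz.} Let $L=\tan(1/2)$. Every segment of $J_n$ has slope in $[-L,L]$ and strictly positive horizontal extent, because $\cos\phi_{n,i}\ge\cos(1/2)>0$. Also, the curve $J_n$ traverses its segments in order. So the $x$-coordinate is strictly increasing along the parametrization, and $J_n$ is the graph of a piecewise linear function $f_n$ on $J_0=[a,b]$. Since every piece has slope at most $L$ in absolute value, $f_n$ is $L$-Lipschitz, with $f_n(a)=f_n(b)=0$.

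\textbf{Step 3: passing to the limit.} The main technical point is identifying $\Gamma$, defined as the uniform limit of the parametrized curves $J_n$, with the graph of $f=\lim f_n$. First, $f_n\to f$ uniformly. Each stage moves points by at most $R_{n-1}$, and $R_n\le R_0 3^{-n}$ since $\cos\theta_j>\cos(1/2)>1/2$, so the sequence is summable.

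Next, parametrize each $J_n$ by its $x$-coordinate, $x\mapsto (x,f_n(x))$. The paper's parametrization of $J_n$ differs from this one only by a monotone reparametrization. The stage-$n$ vertices are fixed at all later stages, and a vertex's $x$-coordinate never changes. So the two parametrizations agree on a dense set of parameter values, namely the vertices. Both families of curves are equicontinuous, so the two limits have the same image. Thus $\Gamma=\{(x,f(x)):x\in[a,b]\}$.

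Finally, Lipschitz bounds survive uniform limits, so $f$ is $L$-Lipschitz and $\Gamma$ is a Lipschitz graph.
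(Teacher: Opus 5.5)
Your proof is correct and follows the same route as the paper. Like the paper, you bound every segment's direction angle by $\sum_n\theta_n$, deduce that each approximant $J_n$ is the graph of a function with slope at most $\tan(\sum_n\theta_n)$, and pass to the uniform limit; you additionally supply details the paper leaves implicit, namely the rotation pattern $0,+\theta_n,-\theta_n,-\theta_n,+\theta_n,0$ read off from \eqref{horizontal} and the identification of the limit curve with the graph of $\lim f_n$.
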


\begin{proof}
For every finite-stage polygonal approximant each segment’s angle (with respect to the $x$-axis) is bounded in absolute value by $\sum_n \theta_n$, and the polygonal curve is the graph of a function $y=y(x)$ with
$$
|y'(x)|\leq \tan(\sum \theta_n).
$$
on the differentiable pieces. The polygonal approximants converge uniformly to the planar limit curve $\Gamma$. Thus $\Gamma$ is Lipschitz with constant $L\leq \tan(\sum \theta_n)$.
\end{proof}

\item 
Since $\Gamma$ is a Lipschitz graph we can consider its \textit{horizontal lift} in $\He$. Writing points in $\mathbb{H}^1$ as $(V,z)$ with $V=x+iy\in \mathbb{C}$, the horizontal lift $E:=E((\theta_n))$ of $\Gamma$: $$w(s)= (V(s), z(s))$$ is defined so that its projection onto $\mathbb{C}$ is $\Gamma$ and that the derivative $w'$ is horizontal, i.e.,
$$
z'(s) = \frac{1}{2} \text{Im}\,\overline{ V(s)}\cdot V'(s).
$$

\item 
We give points in $\Gamma (\subseteq \mathbb{C})$ an ordering from left to right and a word expression $w=(w_n)_{n=1}^\infty$ with $w_n\in \{1,2,\dots, 6\}$. Recall that $\Gamma$ is the uniform limit of the entire stage $n$ curve as $n\to \infty$. For a point $(w_n)_n$, the coordinate $w_1$ represents that it corresponds to a point in the $w_1$-th segment when we replace $J_0$ with the $6$-gonal curve at stage $1$; once $w_n$ locates the $J_{n,i}$ segment at stage $n$, the coordinate $w_{n+1}$ represents that it corresponds to a point in the $w_{n+1}$-th segment when we replace $J_{n,i}$ with the $6$-gonal curve at stage $(n+1)$. When there is no ambiguity, we will denote points of $E (\subseteq \mathbb{H}^1)$ by the same word expression as points in $\Gamma$, i.e. $(w_n)_n$.  

\end{enumerate}

\begin{lem}
The set $E$ is a Lipschitz graph and thus it is also  $1$-Ahlfors regular.
\end{lem}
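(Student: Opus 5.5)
We will show that the horizontal lift $w(s)=(V(s),z(s))$ of $\Gamma$ is bi-Lipschitz, after reparametrizing by the $x$-coordinate. Then $E$ is the image of an interval under a bi-Lipschitz map, and $1$-Ahlfors regularity follows directly.

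By Lemma \ref{gamma}, $\Gamma$ is the graph of a function $g:I_0\to\R$ on the interval $I_0$ underlying $J_0$, with $\mathrm{Lip}(g)\le L=\tan(\sum\theta_n)<\infty$. Parametrize $\Gamma$ by $V(x)=(x,g(x))$, so that $V$ is Lipschitz and differentiable almost everywhere. The horizontal lift is
$$w(x)=\Bigl(x,g(x),\tfrac12\int_{a}^{x}\bigl(t\,g'(t)-g(t)\bigr)\,dt\Bigr),$$
which is well defined because $g'\in L^\infty$.

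For $x<x'$, left translation and the group law \eqref{eq:Heis} give
$$w(x)^{-1}w(x') = \bigl(x'-x,\; g(x')-g(x),\; A(x,x')\bigr),$$
where $A(x,x')$ is the signed area enclosed between the arc $V|_{[x,x']}$ and the chord from $V(x)$ to $V(x')$. This is the same area-formula computation used in the proposition of Section \ref{sec:l1unbound}. It reduces to the identity $\frac12\int_x^{x'}(X\,dY-Y\,dX)$ computed in translated coordinates, so that the chord contributes zero area.

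\textbf{Lower bound.} Since the Koranyi norm dominates the horizontal part,
$$d(w(x),w(x'))\ge |x'-x|.$$

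\textbf{Upper bound.} The horizontal part is at most $\sqrt{1+L^2}\,|x'-x|$. The region between the arc and the chord lies in a parallelogram of base $|x'-x|$ and height at most $2L|x'-x|$, because both the arc and the chord have slopes bounded by $L$. Hence
$$|A(x,x')|\le 2L|x'-x|^2,\qquad\text{so}\qquad |A|^{1/2}\lesssim_L |x'-x|.$$
It follows that $d(w(x),w(x'))\asymp_L |x'-x|$. Thus $w$ is bi-Lipschitz from $I_0$ onto $E$, and $E$ is a Lipschitz graph over the $x$-axis in the horizontal sense.

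\textbf{Ahlfors regularity.} Because $w$ is bi-Lipschitz, $\mathcal H^1|_E$ is comparable to the push-forward of Lebesgue measure on $I_0$. A ball $B(w(x),r)\cap E$ contains $w([x-cr,x+cr]\cap I_0)$ and is contained in $w([x-Cr,x+Cr]\cap I_0)$. For $r\le\diam E\asymp|I_0|$, both intervals have length comparable to $r$, which gives \eqref{eq:ADR}.

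The only step needing care is justifying the area formula for the vertical component when the curve is merely Lipschitz rather than piecewise smooth. I would handle this by approximation with the polygonal $J_n$. For these the formula is elementary, and both sides converge as $n\to\infty$: the lifts converge uniformly, and the integrands $tg_n'-g_n$ converge weakly-$*$.
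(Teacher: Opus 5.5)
Your proof is correct, but it is considerably more explicit than the paper's and gets the Lipschitz bound by a different mechanism. The paper's proof is a short sketch. It writes $z$ as $z(0)+\frac12\int \mathrm{Im}\,\overline{V}V'$, notes that $V$ is Lipschitz, and combines this with Lemma \ref{gamma}. Implicitly it relies on the standard fact that a horizontal curve whose planar projection has bounded speed is Lipschitz into $(\He,d)$, because its sub-Riemannian (Carnot--Carath\'eodory) length equals the Euclidean length of the projection. It leaves both the lower bound and the verification of \eqref{eq:ADR} to the reader.

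You instead compute $w(x)^{-1}w(x')$ exactly and identify its vertical coordinate with the signed area between arc and chord. You then bound that area by $O(L|x'-x|^2)$ using the graph structure, and pair it with the trivial bound $d(w(x),w(x'))\ge|x'-x|$. This gives a bi-Lipschitz parametrization over $I_0$ and hence Ahlfors regularity directly.

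What your route buys:
\begin{itemize}
\item It is self-contained, with no appeal to the length of horizontal curves.
\item It gives an explicit proof of the upper density bound in \eqref{eq:ADR}. That bound genuinely needs the lower estimate $d\ge|x'-x|$ coming from the graph property, which the paper's sketch does not spell out.
\end{itemize}
The paper's route is more general: its Lipschitz upper bound does not use that $\Gamma$ is a graph.

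The step you flag as delicate is not. Since $g$ is Lipschitz, hence absolutely continuous, integrating $\int_x^{x'}t\,g'(t)\,dt$ by parts gives directly
\[
A(x,x')=\int_x^{x'}\bigl(\ell(t)-g(t)\bigr)\,dt,
\]
where $\ell$ is the affine function whose graph is the chord. You also have the pointwise bound $|g(t)-\ell(t)|\le 2L\min(t-x,\,x'-t)$, which gives $|A(x,x')|\le \tfrac{L}{2}|x'-x|^2$. This is cleaner than your parallelogram description, whose stated dimensions do not quite match the region. No polygonal approximation is needed, although your weak-$*$ approximation argument would also work.
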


\begin{proof}
The lift $E$ of $\Gamma$ satisfies
$$
z(t) = z(0) + \frac{1}{2}\int_0^t \text{Im}\, \overline{V(s)} V'(s)\,ds.
$$
Thus 
$$
|V(t_2)-V(t_1)|\lesssim |t_2-t_1|.
$$
Combining this with Lemma \ref{gamma}, we get that $E$ is a Lipschitz graph.
\end{proof}

\begin{remark}
\label{rem:noselfintersecting} Since $E$ is a Lipschitz graph it is a non self-intersecting curve.
\end{remark}
We can now restate the main theorem of this section.
\begin{thm}\label{5.1}
Let $\alpha \in (0,2)$ and let $K_{\alpha}$ be a kernel as in \eqref{seankernels}. Denote by $T^{\alpha}$ its corresponding singular integral. There exists some $c:=c(\alpha)>0$, such that by taking $\theta_n = \frac{c}{n^{1/\alpha}}$, $$\langle T^{\alpha} \chi_E, \chi_E \rangle =\infty,$$
where $E:=E((\theta_n))$ is the $1$-Ahlfors curve that we previously defined. In particular, the operator $T^{\alpha}$ is not $L^2(E)$ bounded.
\end{thm}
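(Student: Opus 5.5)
Since $K_\alpha\ge 0$, we will bound $\langle T^\alpha_\ve\chi_E,\chi_E\rangle$ from below by a sum of local contributions, one for each scale $n$ and each stage-$(n-1)$ segment, and let $\ve\to0$. The key quantity is the non-horizontal part of $w(s)^{-1}w(t)$. By the area formula used in Section \ref{sec:l1unbound}, $NH(w(s)^{-1}w(t))^2$ equals the absolute value of the signed area enclosed between the arc of $\Gamma$ from $V(s)$ to $V(t)$ and the chord joining its endpoints.

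\textbf{Step 1: area at scale $n$.} Fix $n$ and a stage-$(n-1)$ segment $J_{n-1,i}$, of length $R_{n-1}$. Take $p$ in the first third of the arc of $E$ lying over $J_{n-1,i}$ and $q$ in the second third; in words, $p$ has $w_n=1$ and $q$ has $w_n\in\{2,3\}$, say. At stage $n$ the polygonal arc between them deviates from the chord by a triangle of height about $\theta_n R_n$ and base about $R_n$. We will show that the later stages $m>n$ change this signed area only by a small proportion. Each later replacement of a segment of length $R_{m-1}$ is a bump consisting of an up-triangle and a down-triangle. The net area of a complete bump is zero by the symmetry in \eqref{horizontal}. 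A partial bump at either endpoint contributes at most $\theta_m R_m^2$. Summing over $m>n$ gives a total error of at most $\sum_{m>n}\theta_m R_m^2$, and since $R_m\le 2^{-(m-n)}R_n$ this is $\ll \theta_n R_n^2$ for $n$ large. Here we use that $\theta_n$ is decreasing and that $\theta_m/\theta_n$ is bounded. Hence $NH(p^{-1}q)^2\gtrsim \theta_n R_n^2$, while $d(p,q)\asymp R_n$ because $E$ is a lifted Lipschitz graph. Consequently
\[
K_\alpha(p^{-1}q)\gtrsim \frac{\theta_n^{\alpha/2}R_n^{\alpha}}{R_n^{\alpha+1}}=\theta_n^{\alpha/2}R_n^{-1}.
\]
I expect this step to be the main obstacle: we must check that the tails of higher-stage bumps near the endpoints cannot cancel the stage-$n$ triangle. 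I would handle it by choosing $p$ and $q$ within segments of stage $n$ but away from the endpoints of $J_{n,1}$ (for instance in its middle half), so that only the boundary contributions from partial bumps matter, and these are controlled geometrically.

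\textbf{Step 2: summation.} The set of such pairs $(p,q)$ over one $J_{n-1,i}$ has $\mathcal H^1\times\mathcal H^1$ measure $\gtrsim R_n^2$. Its contribution to $\langle T^\alpha\chi_E,\chi_E\rangle$ is therefore $\gtrsim \theta_n^{\alpha/2}R_n$. The pairs are distinct for different $(n,i)$, since they lie in different stage-$n$ subsegments or at different scales, and the kernel is nonnegative. Summing over the $6^{n-1}$ segments and using $6^{n-1}R_n\asymp \mathcal H^1(E)$ up to the factor $\prod(2+4\cos\theta_j)/6$, which stays bounded because $\sum(1-\cos\theta_j)\lesssim\sum\theta_j^2<\infty$, we obtain
\[
\langle T^\alpha\chi_E,\chi_E\rangle\gtrsim \sum_n \theta_n^{\alpha/2}\cdot 6^{n-1}R_n\gtrsim\sum_n \theta_n^{\alpha/2}=c^{\alpha/2}\sum_n n^{-1/2}.
\]
This sum diverges. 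Passing to the limit $\ve\to0$ is justified by monotone convergence, since the kernel is nonnegative.

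\textbf{Step 3: admissibility of $\theta_n$.} As stated, \eqref{3} requires $\sum\theta_n<\tfrac12$. For $\theta_n=c n^{-1/\alpha}$ with $\alpha<2$ we have $1/\alpha>1/2$, but summability of $\theta_n$ itself needs $1/\alpha>1$. I would therefore relax \eqref{3} to a bound on the Lipschitz constant. Along any branch, the slopes are controlled by an alternating-sign structure, since each bump turns by $+\theta_n$ and then $-\theta_n$, so the accumulated angle is at most about $\sup\theta_n$ plus a square-summable random-walk-type error. Alternatively, in Step 2 I would use only $\sum\theta_n^{\alpha/2}=\infty$ together with a Lipschitz bound derived from $\sum\theta_n^2<\infty$. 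The condition $\sum\theta_n^2<\infty$ holds precisely because $2/\alpha>1$, which is where $\alpha<2$ enters. Finally, $c$ is chosen small enough to keep all angles below $\pi/4$.
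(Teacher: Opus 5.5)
\textbf{The gap is in Step 3, and it is not cosmetic.} There is no alternating-sign structure across stages. By \eqref{horizontal}, a stage-$n$ segment with $w_n\in\{2,5\}$ is rotated by $+\theta_n$ relative to its parent, and one with $w_n\in\{3,4\}$ by $-\theta_n$. The digits are chosen independently at each stage. Along the branch $w=(2,2,2,\dots)$ the nested stage-$N$ segments therefore point in direction $\sum_{n\le N}\theta_n$, and their endpoints lie on $\Gamma$.

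With your literal choice $\theta_n=cn^{-1/\alpha}$ and $\alpha\in[1,2)$, this sum diverges, so $\Gamma$ has chords in every direction. It is not a graph, Lemma \ref{gamma} fails, and neither of your proposed substitutes shows that $E$ is a $1$-regular curve:
\begin{itemize}
\item $\sum\theta_n^2<\infty$ only gives finite length;
\item the random-walk heuristic controls typical branches, not all of them.
\end{itemize}
For $\alpha<1$ your choice does satisfy \eqref{3} and nothing is lost. You have also misplaced where $\alpha<2$ enters. The conditions $\sum\theta_n^2<\infty$ and $\sum\theta_n^{\alpha/2}=\infty$ are compatible for all $\alpha<4$, which is exactly the non-graph regime the paper flags as harder.

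\textbf{The repair is to change the exponent, not the construction.} The exponent in the statement is off by a factor $2$. The paper's proof treats $K_{2\alpha}$ with $\alpha\in(0,1)$ and $\theta_n=cn^{-1/\alpha}$, which is $\theta_n=cn^{-2/\alpha}$ in the statement's normalization. Then $\sum\theta_n<\infty$ precisely because $\alpha<2$, so \eqref{3} holds for small $c$, while $\sum\theta_n^{\alpha/2}=c^{\alpha/2}\sum n^{-1}=\infty$. Your Step 2 had slack ($\sum n^{-1/2}$), and this is where it is spent; Step 3 then disappears. With that change, Steps 1--2 follow the paper's scheme: a bound $NH(p^{-1}q)^2\gtrsim R_n^2\theta_n$ for pairs first separated at digit $n$, then a sum over disjoint scales.

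Step 1 still needs two repairs. Write $A_1$ for the common endpoint of $J_{n,1}$ and $J_{n,2}$.
\begin{itemize}
\item Drop $w_n=3$. For $p$ on $J_{n,1}$ and $q$ on $J_{n,3}$, the stage-$n$ polygonal signed area changes sign along $J_{n,3}$ and vanishes at some $q$.
\item Track constants. The stage-$(n+1)$ partial bumps at the two ends can each contribute on the order of $S_{n+1}\approx R_n^2\theta_n/36$ for every $n$; this is not $o(\theta_nR_n^2)$ as $n\to\infty$. That is not negligible against the triangle $pA_1q$ when $p,q$ only range over middle halves, since its area can then be as small as about $R_n^2\theta_n/32$. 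You must place $p$ near the start of $J_{n,1}$ and $q$ near the end of $J_{n,2}$ and keep track of the constants.
\end{itemize}
The paper avoids this by pairing $w_n=1$ with $w_n=4$, so the whole up-triangle of area $S_n$ is enclosed. It then bounds all later stages crudely by $\sum_j 2\cdot 6^{j-1}S_{n+j}$. Your observation that a complete bump has zero net signed area is correct and gives a sharper error term, but only once this bookkeeping is done.
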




The key step in the proof of Theorem \ref{5.1} is the following technical lemma.

\begin{lem}\label{5.4}
Fix arbitrary $n$. Consider arbitrary $p =(p_i), q=(q_i)\in E$ whose first $(n-1)$ coordinates agree, and $p_n=1, q_n =4$. We have
$$
|NH(p^{-1}q)|^2 \gtrsim R_n^2 \theta_n,
$$
uniformly in $n$ and $i$.
\end{lem}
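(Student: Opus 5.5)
The plan is to read $NH(p^{-1}q)^2$ as a signed area and split it into a main polygonal term and a small error. Since $E$ is the horizontal lift of $\Gamma$, for $p,q\in E$ with projections $p',q'\in\Gamma$ we have
$$NH(p^{-1}q)^2=\Big|z(q)-z(p)-\tfrac12(x_py_q-y_px_q)\Big|=\Big|\tfrac12\int_{\Gamma[p',q']}x\,dy-y\,dx \;-\; \tfrac12\int_{[p',q']} x\,dy-y\,dx\Big|.$$
This is the signed area $A(\Gamma[p',q'])$ enclosed by the arc $\Gamma[p',q']$ closed up by the chord $[q',p']$ (Stokes/Green, exactly as in the area computation of Section \ref{sec:l1unbound}). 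I will compare this with the same quantity for polygonal approximants. Let $J$ be the common stage-$(n-1)$ segment containing $p',q'$. Let $\pi_m$ denote the point of $J_m$ with the same word prefix as the point in question; concretely, take the nearest vertex of $J_m$ in the stage-$m$ cell containing it. Then I write
$$A(\Gamma[p',q'])=A(J_n[\pi_n p',\pi_n q'])+\sum_{m\ge n}\big(A(J_{m+1}[\pi_{m+1}p',\pi_{m+1}q'])-A(J_m[\pi_m p',\pi_m q'])\big).$$

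\emph{Main term.} Here $\pi_np'$ lies on the first segment of the six-gon replacing $J$, and $\pi_nq'$ lies on the fourth. Segments $2,3$ form a triangle above the line of $J$ with base $2R_n\cos\theta_n$ and height $R_n\sin\theta_n$, so its area is $R_n^2\sin\theta_n\cos\theta_n$. The part of segment $4$ traversed up to $q$ lies below that line. The closed polygon through $\pi_np'$, the two apex points, the midpoint and $\pi_nq'$ therefore has signed area equal to the upper triangle minus at most a sub-triangle of the lower one. The lower sub-triangle has apex at the midpoint and its third side runs toward $\pi_n p'$, and the shear of the chord only shrinks it. A direct computation with the explicit vertices \eqref{horizontal} should give
$$|A(J_n[\cdot])|\ge c_0R_n^2\theta_n,$$
uniformly in where $p,q$ sit on segments $1$ and $4$, since all the relevant lengths are comparable to $R_n$. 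This elementary but fiddly computation is the first thing I would check carefully, since the positions on segment $4$ matter.

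\emph{Error terms.} The key observation is that each six-gon replacement \eqref{horizontal} is centrally symmetric about the midpoint $(x/2,y/2)$ of the segment it replaces, so its up-bump and down-bump cancel. Hence replacing a \emph{full} stage-$m$ segment by its stage-$(m+1)$ six-gon changes the enclosed signed area by zero. Consequently the $m$-th difference only involves the two stage-$m$ segments containing $\pi_mp',\pi_mq'$ (partial replacement), together with the small shift of the chord endpoints from $\pi_m$ to $\pi_{m+1}$. Each of these contributes at most $CR_m^2\theta_{m+1}\le CR_m^2\theta_n$: a triangle with sides $\lesssim R_m$ and angle $\lesssim\theta_{m+1}$, plus a sliver between chords with endpoints moved by $\lesssim R_{m+1}$ at distance $\lesssim R_n$ but at angle $\lesssim\theta$. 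Since $R_{m}\le R_n 5^{-(m-n)}$ (as $2+4\cos\theta_j\ge 5$ by \eqref{3}), the total error is at most $C'R_n^2\theta_n\sum_{k\ge1}25^{-k}$. The hoped-for conclusion is that this is at most $\tfrac{c_0}{2}R_n^2\theta_n$, giving $NH(p^{-1}q)^2\gtrsim R_n^2\theta_n$.

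\emph{Main obstacle.} The crux is this constant comparison in the error estimate: showing the partial-segment and chord-shift errors really are a small fraction of $c_0R_n^2\theta_n$. If the crude constants fail, I would first try summing only from $m\ge n+1$ with the sharper factor $\theta_{m+1}/\theta_n\le1$ and $(R_{m}/R_n)^2$. If that is still not enough, I would refine the lower bound $c_0$ by tracking exactly the portion of segment $4$ used.
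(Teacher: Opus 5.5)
There is a genuine gap, and it is the step you yourself call the crux. The lemma \emph{is} the comparison between the stage-$n$ area and the errors, and both are of order $R_n^2\theta_n$. Your error bookkeeping does not support that comparison.

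\emph{The $m=n$ term is not small.} You sum over $m\ge n$ but quote $\sum_{k\ge1}25^{-k}$. The $m=n$ term (passing from $J_n$ to $J_{n+1}$) is bounded by your own estimate only by $CR_n^2\theta_{n+1}$, which is the same order as the main term.

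\emph{The chord-shift errors have the wrong scaling.} They are not $O(R_m^2\theta_{m+1})$. The chord has length $\sim R_n$, so moving an endpoint by $\delta$ sweeps an area of order $\delta R_n$ times the sine of the angle with the chord. Your own sliver description gives $R_{m+1}R_n\theta$, which for $m>n$ exceeds $R_m^2\theta_{m+1}$ by a factor comparable to $R_n/R_m$. So the errors decay only like $6^{-(m-n)}$ and sum to a constant times $R_n^2\theta_n$; that constant decides the lemma.

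\emph{With your nearest-vertex choice of $\pi_m$ the scheme actually fails.} Take $R_n=1$, $p'$ just left of the midpoint of $J_{n,1}$, and $q'$ just right of the midpoint of $J_{n,4}$. Then $\pi_np'$ is the left end of $J_{n,1}$ and $\pi_nq'$ is the lower apex, so the main term is $\frac12\sin\theta_n(1+4\cos\theta_n)$. One stage later, $\pi_{n+1}p'$ and $\pi_{n+1}q'$ are the midpoints of $J_{n,1}$ and $J_{n,4}$, and the area is $\frac12\sin\theta_n(\frac14+3\cos\theta_n)$ up to $O(R_{n+1}^2\theta_{n+1})$. The single $m=n$ error is therefore about $\frac78\theta_n$. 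But no uniform $c_0$ can exceed $\sin\theta_n\cos\theta_n/\theta_n<1$, because the main term equals $S_n$ whenever $\pi_nq'$ is the midpoint of $J$. Hence \emph{errors $\le c_0/2$} is false.

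Incidentally, in the main term the lower piece has the same sign as the upper triangle: the polygon is simple and encloses both. So $|A(J_n[\cdot])|\ge S_n=R_n^2\sin\theta_n\cos\theta_n$ is immediate, and the whole difficulty sits in the endpoint and chord terms. What is missing is a localization of $p'$ and $q'$ that controls where the chord runs, followed by an explicit constant check.

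The paper gets this localization from the fact (via \cite[Lemma 2.7]{seanmarkov}) that $p'$ and $q'$ lie in the convex hulls of the stage-$(n+1)$ replacements of $J_{n,1}$ and $J_{n,4}$. From there:
\begin{itemize}
\item the line $pq$ meets $J_{n,2}$ in its lowest $1/(2+4\cos\theta_{n+1})$ portion and passes below the midpoint;
\item the positive area is at least $\frac{1+4\cos\theta_{n+1}}{2+4\cos\theta_{n+1}}S_n$;
\item the negative areas are at most those of the triangles $ABO$ and $CDO$, namely $\frac12R_nR_{n+1}\sin\theta_{n+1}(1+2\cos\theta_n)$ and $\frac12R_nR_{n+1}\sin\theta_{n+1}$;
\item later stages cost at most $\sum_j2\cdot6^{j-1}S_{n+j}$;
\item the margin $\frac26-\frac15>0$ survives.
\end{itemize}

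Your central-symmetry cancellation is correct and would neatly replace that last crude bound. It reduces everything to the pentagon with vertices $p'$, the right end of $J_{n,1}$, the apex, the midpoint of $J$, and $q'$, plus the two partial-arc areas near $p'$ and $q'$. But without the hull localization and the explicit comparison of constants, the argument is not a proof.
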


\begin{center}
\begin{tikzpicture}[scale=1]
\draw (-4,0) node[anchor=south] {\scriptsize $O$};
\draw[densely dotted, fill=gray!30] (-10.5, 0) node[anchor=east] {\scriptsize $B$} -- (-9.7, 0.15) node[anchor=south]{\scriptsize $A$} -- (-8,0) -- (-8.8, -0.15) -- (-10.5, 0);
\filldraw  (-9.1, 0.05) circle (0.5pt);
\draw (-9.1, 0.05) node[anchor=south] {\scriptsize $p$};
\draw [densely dotted, thick](-7.67,0.17) -- (-4,0);
\draw (-5.83, 0.06) node[anchor=south] {\scriptsize $\ell$};
\draw (-10.5,0) -- (2.5,0);
\draw (-10.1, 0) -- (-9.7,0.15) --(-9.25, 0) -- (-8.8, -0.15) -- (-8.4, 0);
\draw (0.4,0) -- (0.8, 0.15) -- (1.25, 0) -- (1.7, -0.15) -- (2.1, 0);
\draw (-7.67,0.17) -- (-7.42, 0.45) -- (-7,0.5) -- (-6.58, 0.55) -- (-6.33,0.83);
\draw (-5.67,0.83) -- (-5.25, 0.78) -- (-5, 0.5) -- (-4.75, 0.22) -- (-4.33, 0.17);
\draw[densely dotted, fill=gray!30] (-4, 0) -- (-3.25, -0.22)  -- (-2,-1) node[anchor=north] {\scriptsize $D$} -- (-2.75,-0.78) -- (-4,0);
\draw (-3.25,-0.3) node[anchor=south] {\scriptsize $C$};
\filldraw (-2.9,-0.48) circle (0.5pt);
\draw (-2.9,-0.48) node[anchor=south] {\scriptsize $q$};
\draw (-3.67,-0.17) -- (-3.25, -0.22) -- (-3, -0.5) -- (-2.75, -0.78) -- (-2.33,-0.83);
\draw (-1.67,-0.83) -- (-1.42, -0.57) -- (-1,-0.5) -- (-0.58, -0.43) -- (-0.33,-0.17);
\draw (-8,0) -- (-6,1) -- (-4,0)  -- (-2,-1) -- (0,0);
\draw (-4,-1) node[anchor=north]{\small FIGURE 2};
\end{tikzpicture}
\end{center}

\begin{proof}
In Figure 2, the horizontal line represents a line segment in the $(n-1)$-th stage, and the figure also include the $n$-th and $(n+1)$-th stage polygonal replacements. Arguing as in  \cite[Lemma 2.7]{seanmarkov} (note that our curve is a subset of one appearing in \cite{seanmarkov}), $p,q$ belong to the two convex hulls, respectively, the shaded areas in Figure 2. The absolute value of the 3rd coordinate of$ p^{-1}q$, $NH(p^{-1}q)^2$, is the absolute value of the signed area of the region in the $\mathbb{C}$ plane formed by the part of the graph $\Gamma$ from $p$ to $q$ and travel back through a line segment from $q$ to $p$. 

To compute the area, note that every generation $i$, compared to the previous generation, generates positive and negative triangle each with area $S_i=R_i^2\sin \theta_i \cos \theta_i$. Let $J_{n-1,j}$ denote the $(n-1)$-stage segment that the first $(n-1)$ coordinates of $p$ and $q$ correspond to, i.e., the horizontal line in Figure 2. Without loss of generality, denote by $J_{n,1}, \ldots, J_{n,6}$ the six line segments that replaces $J_{n-1,j}$ in stage $n$. Without loss of generality, denote by $J_{n+1, 1}, \ldots, J_{n+1,6}$ the six segments that replaces $J_{n,1}$ in stage $(n+1)$; denote by $J_{n+1, 7}, \ldots, J_{n+1, 12}$ the six segments that replaces $J_{n,4}$ in stage $(n+1)$. By a similar argument to \cite[Lemma 2.7]{seanmarkov} (our curve is a subset of the vertically symmetrical graph from \cite{seanmarkov}), we see that
$$
\{(w_i)\in \Gamma: w_i = p_i \text{ for }i\leq n\} \subseteq \text{ convex hull of } \left(J_{n+1,1} \cup \cdots \cup J_{n+1,6}\right),
$$
and
$$
\{(w_i)\in \Gamma: w_i = q_i \text{ for }i\leq n\} \subseteq \text{ convex hull of } \left(J_{n+1,7} \cup \cdots \cup J_{n+1,12}\right).
$$

Therefore by referring to Figure 1, the $pq$ line intersects with the line $J_{n,2}$ no higher than the lowest $\frac{1}{2+4\cos \theta_{n+1}}$ portion of the segment $J_{n,2}$, and intersects with the line $J_{n,3} \cup J_{n,4}$ in the $J_{n,4}$ portion (Thus $pq$ line is below the line $\ell$ in Figure 2). Thus the positive triangle generated at stage $n$ is with area at least 
\begin{equation}
\left(1-\frac{1}{2+4\cos \theta_{n+1}}\right)S_n=\frac{1+4\cos \theta_{n+1}}{2+4\cos \theta_{n+1}}S_n.
\end{equation}

Without loss of generality, we draw only the figures of the worst-case scenario below, where $p$ on top of the horizontal line in Figure 2 (the $(n-1)$-th stage line segment) and $q$ below it.

We have
$$
|NH(p^{-1}q)^2| \geq I + I\!I,
$$
where $I$ denotes the contribution of stage $n$ to the signed area, and $I\!I$ denotes the contribution of all $>n$ stages. All the later generations $(n+j)$ are compared with the previous generation $(n+j-1)$ to generate positive or negative triangles.
As $E$ does not self-intersect, we have  
$$
I\!I \geq -\sum_{j=1}^\infty 2\cdot 6^{j-1} S_{n+j}.
$$

For $I$, it is bounded below by the positive area minus the two negative areas in worst scenario shown in Figure 3, which in turn is bounded below by the positive area minus the two negative areas in Figure 4, denoted by $I\!I\!I$.

\begin{center}
\begin{tikzpicture}[xscale=1.5, yscale=3]
\draw (-4,0) node[anchor=south] {\scriptsize $O$};
\filldraw  (-9.7, 0.1) circle (0.3pt);

\draw (-10.5,0) -- (-8,0);
\draw[fill=gray!60] (-9.7,0.1) -- (-9.73,0) -- (-8,0) -- (-7.92, 0.04);
\draw[fill=gray!60] (-3.7,-0.11) -- (-3.7, -0.15) node[anchor=west]{\scriptsize negative area \# 2} -- (-3.8, -0.1);
\draw[fill=gray!15] (-7.92, 0.04) -- (-6,1) -- (-3.8, -0.1);
\filldraw (-3.7,-0.11) circle (0.3pt);
\filldraw (-4,0) circle (0.3pt);
\draw (-8,0) -- (-6,1)  -- (-4,0)  -- (-2,-1);
\draw (-6,-1) node[anchor=north]{\small FIGURE 3};
\draw (-6,0.55) node[anchor=north]{\scriptsize positive area};
\draw (-9.7, 0.1) node[anchor=south] {\scriptsize $p$} -- (-3.7,-0.11) node[anchor=south] {\scriptsize $q$};
\draw (-9, 0) node[anchor=north]{\scriptsize negative area \# 1};

\end{tikzpicture}
\end{center}

\begin{center}
\begin{tikzpicture}[xscale=1.5, yscale=3]
\draw[fill=gray!60] (-4, 0) -- (-3.25, -0.22)  -- (-2,-1) node[anchor=north] {\scriptsize $D$} -- (-4,0);
\filldraw  (-9.7, 0.1) circle (0.3pt);

\draw (-10.5,0) -- (-8,0);

\draw[fill=gray!15] (-7.59, 0.21) -- (-6,1) -- (-4, 0) --(-7.59, 0.21);
\draw (-3.25,-0.3) node[anchor=south] {\scriptsize $C$};
\draw (-6,0.29)node[anchor=north]{$\ell$};
\draw[fill=gray!60] (-10.5, 0) node[anchor=east] {\scriptsize $B$} -- (-9.7, 0.15) node[anchor=south]{\scriptsize $A$} -- (-4,0) -- (-10.5, 0);
\draw[densely dotted, thick] (-9.7,0.1) -- (-9.73,0);
\draw (-9.73,0)node[anchor=north]{\scriptsize enlarged negative area \# 1};
\draw[densely dotted, thick] (-3.7,-0.11) -- (-3.7, -0.15) -- (-3.8, -0.1);
\draw (-3.7, -0.45) node[anchor=west]{\scriptsize enlarged negative area \# 2};
\filldraw (-3.7,-0.11) circle (0.3pt);
\filldraw (-4,0) circle (0.3pt);
\draw[densely dotted, thick] (-8,0) -- (-7.59,0.21);
\draw (-6,-1) node[anchor=north]{\small FIGURE 4};
\draw (-6,0.55) node[anchor=north]{\scriptsize shrinked positive area};
\draw[densely dotted, thick] (-9.7, 0.1) node[anchor=east] {\scriptsize $p$} -- (-3.7,-0.11) node[anchor=south] {\scriptsize $q$};
\draw (-4,0) node[anchor=south] {\scriptsize $O$} -- (-8,0);

\end{tikzpicture}
\end{center}

Again due to the convex hulls above, the negative area formed in the stage $n$ by line $pq$ with $J_{n,1}$ and $J_{n,4}$ is at most $\frac{R_n (1+ 2\cos \theta_n)R_{n+1} \sin \theta_{n+1}}{2} + \frac{R_n R_{n+1}\sin \theta_{n+1}}{2}$. In fact, $pq$ line and $J_{n,1}$ can form a negative area at most the area of the triangle $ABO$, whose area is $\frac{R_n (1+ 2\cos \theta_n)R_{n+1} \sin \theta_{n+1}}{2}$. Similarly, $pq$ line and $J_{n,4}$ can form a negative area at most the area of the triangle $CDO$, whose area is $\frac{R_n R_{n+1}\sin \theta_{n+1}}{2}$. Thus 
$$
I\!I\!I \geq \frac{1+4\cos \theta_{n+1}}{2+4\cos \theta_{n+1}}S_n - \frac{R_n (1+ 2\cos \theta_n)R_{n+1} \sin \theta_{n+1}}{2} - \frac{R_n R_{n+1}\sin \theta_{n+1}}{2}.
$$

Therefore
\begin{align*}
|NH(p^{-1}q)^2|&\geq I + I\!I \geq I\!I\!I + I\!I\\
&\geq \left(\frac{1+4\cos \theta_{n+1}}{2+4\cos \theta_{n+1}}S_n - \frac{R_n (1+ 2\cos \theta_n)R_{n+1} \sin \theta_{n+1}}{2} - \frac{R_n R_{n+1}\sin \theta_{n+1}}{2}\right) -\sum_{j=1}^\infty 2\cdot 6^{j-1} S_{n+j} \\
&= R_n^2 \cdot \left( \frac{(1+4\cos \theta_{n+1}) \sin \theta_n \cos \theta_n}{2+4\cos \theta_{n+1}} - \frac{\sin \theta_{n+1}(1+\cos \theta_n)}{2+4\cos \theta_{n+1}} - \sum_{j=1}^\infty 2 \cdot 6^{j-1} \cdot \frac{\sin \theta_{n+j} \cos \theta_{n+j}}{\Pi_{k=n+1}^{n+j} (2+4\cos \theta_k)^2} \right) \\
&\geq R_n^2 \sin \theta_n \left( \frac{(1+4\cos \theta_{n+1})  \cos \theta_n}{2+4\cos \theta_{n+1}} - \frac{1+\cos \theta_n}{2+4\cos \theta_{n+1}} - \sum_{j=1}^\infty 2 \cdot 6^{-j-1} \cdot \frac{\cos \theta_{n+j}}{\prod_{k=n+1}^{n+j} (\frac{1}{3}+\frac{2}{3}\cos \theta_k)^2} \right).
\end{align*}
By (\ref{3}), $\sum \theta_n^2 <1$, and thus 
$$
\prod_{k=n+1}^{n+j} (\frac{1}{3}+\frac{2}{3}\cos \theta_k)^2 \geq \prod_{k=n+1}^{n+j} (1-\frac{1}{3} \theta_k^2)^2 \geq \prod_{k=n+1}^{n+j} (1-\frac{2}{3} \theta_k^2) \geq 1 - \frac{2}{3} \sum_{k=n+1}^\infty \theta_k^2 >\frac{1}{3}.
$$
Since each $\theta_n <\frac{1}{2}$, $\cos \theta_n > \frac{\sqrt{3}}{2}$ for all $n$. Therefore
\begin{align*}
|NH(p^{-1}q)^2|&\geq R_n^2 \sin \theta_n \left( \frac{4\cos \theta_{n+1} \cos \theta_n - 1}{2+4\cos \theta_{n+1}} - \sum_{j=1}^\infty  6^{-j} \right) \\
& \geq R_n^2 \sin \theta_n \left( \frac{2}{6} - \frac{1}{5} \right) \sim R_n^2 \theta_n. 
\end{align*}
\end{proof}

\begin{proof}[Proof of Theorem \ref{5.1}]
Let $\alpha \in (0,1)$. We will show $L^2$-unboundedness of $T^{2\alpha}$. Let $$
\theta_n = \frac{c}{n^{1/\alpha}},
$$
where $c:=c(\alpha)$ is chosen small enough so that \eqref{3} holds. Let $E:=E(\theta_n))$ as in the construction in the beginning of this section.

Denote $F_n= \{w\in E: w_n=1\}$. For any $p\in E$, denote 
$$
E_{n,p} = \{w\in E: w_i = p_i \text{ for } 1\leq i\leq n-1, w_n=4\}.
$$
Note $E_{n,p}$ has measure at least the length of some line segment $J_{n,i}$, which is $R_n$. By a similar argument as in \cite[Lemma 2.7]{seanmarkov}, $p$ is contained in the convex hull of the $6$-gonal replacement of the $n$-stage line segment that corresponds to $p$, and $q$ is contained in the convex hull of the $6$-gonal replacement of the $n$-stage line segment that corresponds to $q$. Thus
$$
\|p^{-1} q\|\lesssim R_{n-1} = R_n (2+ 4\cos \theta_n) \lesssim R_n.
$$
By Lemma \ref{5.4}, for $p\in F_n$,
\begin{align*}
\int_{E_{n,p}}K_{2 \alpha}(p^{-1}q)\,dq =\int_{E_{n,p}}\frac{NH(p^{-1}q)^{2\alpha}}{\|p^{-1}q\|^{2\alpha+1}}\,dq \gtrsim \int_{E_{n,x}}\frac{R_n^{2\alpha} \theta_n^\alpha}{R_n^{2\alpha+1}}\,dy \gtrsim \theta_n^\alpha.
\end{align*}
Note $\{F_n \times E_{n,p}\}_n$ are disjoint subsets in $E\times E$. Therefore
\begin{align*}
\langle T^{2 \alpha} \chi_E, \chi_E \rangle = \iint K_{2\alpha}(p^{-1}q)\,dq \geq \sum_n \int_{F_n} \int_{E_{n,p}} K_{2\alpha}(p^{-1}q)\,dq\,dp  \gtrsim \sum_n \theta_n^\alpha.
\end{align*}
Hence, 
$\langle T^{2\alpha} \chi_E, \chi_E \rangle =\infty.$ 
\end{proof}

\section{A SIO which is $L^2$-bounded on a $1$-unrectifiable set}
\label{sec:mainthm2}
In this section, we will prove Theorem \ref{mainthm2}. Let $S \subset [0,1]$ be any Ahlfors $\frac{1}{2}$-regular set. Let $f : [0,1] \to \He$ be defined as
$$
f(t)=(t,0,t), \quad t \in [0,1].
$$
Note that for $s,t \in [0,1]$, we have
\begin{align*}
  \|f(t)^{-1}f(s)\| = \|(s-t,0,s-t)\| \asymp |s-t|^{1/2}.
\end{align*}
Thus, it follows that $E:=f(S) \subset \He$ is an Ahlfors 1-regular set. Note that $\diam(E) \leq 2$. We will first show that the set $E$ is purely $1$-unrectifiable.

\begin{lem}
 The set $E$ is purely 1-unrectifiable.
\end{lem}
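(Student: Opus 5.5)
We need to show $\mathcal{H}^1(E\cap \gamma(\R))=0$ for every Lipschitz $\gamma:\R\to\He$. The plan is to exploit the structure of $E$: it lies on the curve $f([0,1])=\{(t,0,t)\}$, which is very far from horizontal. Along this curve distances behave like $|s-t|^{1/2}$, so in the Heisenberg metric it is a $2$-dimensional object whose horizontal projection has Euclidean length one.

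\textbf{Step 1 (Lipschitz control of the first coordinate).} Let $\pi:\He\to\R^2$, $\pi(x,y,z)=(x,y)$, be the horizontal projection. It is $1$-Lipschitz from $(\He,d)$ to Euclidean $\R^2$, since $\|(x,y,z)\|\ge (x^2+y^2)^{1/2}$ and $\pi$ is a group homomorphism. Consequently, for any $A\subset\He$,
\[
\mathcal{H}^1_{\R^2}(\pi(A))\le \mathcal{H}^1(A).
\]
This alone gives no lower bound, so we need a reverse estimate on $E\cap\gamma(\R)$.

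\textbf{Step 2 (Pansu differentiability on $\gamma$).} Let $\gamma$ be Lipschitz and let $G=\{t:\gamma(t)\in E\}$. By Pansu's theorem, for a.e.\ $t$ the curve is infinitesimally a horizontal line: $\gamma(t)^{-1}\gamma(t+h)=\delta_h(v_t)+o(h)$ with $v_t=(a,b,0)$. Take $t$ to be a density point of $G$ at which Pansu differentiability holds. Then there are $s\in G$ with $s\to t$, $s\ne t$. Writing $\gamma(t)=f(u)$ and $\gamma(s)=f(u')$, we have
\[
\gamma(t)^{-1}\gamma(s)=(u'-u,\,0,\,u'-u).
\]
The norm of this increment is $\asymp|u'-u|^{1/2}=O(|s-t|)$, so $|u'-u|=O(|s-t|^2)$. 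Hence the horizontal part $(u'-u,0)$ is $O(|s-t|^2)=o(|s-t|)$. Comparing with the expansion $\delta_{s-t}(v_t)+o(|s-t|)$, the horizontal parts give $v_t=0$.

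\textbf{Step 3 (conclusion).} On the other hand, at a density point we can choose $s$ so that $|s-t|$ is of order $h$ for all small $h$. Since $v_t=0$, this yields $d(\gamma(t),\gamma(s))=o(|s-t|)$, i.e.\ the metric derivative of $\gamma$ vanishes at $t$. So the metric derivative of $\gamma$ is zero at a.e.\ point of $G$. By the area formula for Lipschitz curves in metric spaces,
\[
\mathcal{H}^1(\gamma(G))\le\int_G|\dot\gamma|=0,
\]
and therefore $\mathcal{H}^1(E\cap\gamma(\R))=0$.

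The main obstacle is justifying that the metric derivative vanishes on all of $G$ and not only along a subsequence. Pansu's theorem gives differentiability of $\gamma$ everywhere along a.e.\ direction, and $v_t=0$ then implies $|\dot\gamma(t)|=|v_t|=0$. So only the identification $v_t=0$ is needed, which Step 2 supplies. A simpler alternative, if preferred, is to note that $\gamma|_G$ composed with $f^{-1}$ is $2$-Hölder: $|u-u'|\lesssim|s-t|^2$. A $2$-Hölder map on a subset of $\R$ has image of Lebesgue measure zero, and in fact it is constant on each component of $G$. Then $\mathcal{H}^1(E\cap\gamma(\R))=\mathcal{H}^1(f(U))$ with $U=f^{-1}(E\cap\gamma(\R))$. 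This has $\mathcal{H}^{1/2}(U)=0$ because a $2$-Hölder image of a subset of $\R$ has $\mathcal{H}^{1/2}$-measure zero by a covering argument, and $\mathcal{H}^1(f(U))\asymp\mathcal{H}^{1/2}(U)=0$. I would use this covering argument as the primary route.
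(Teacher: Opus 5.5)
Your Steps 2--3 already give a correct proof, by a route genuinely different from the paper's. The paper works on the side of the set. It invokes the tangent-measure characterization of $1$-rectifiability in $\He$ and observes that, since $p^{-1}E\subset L_1$ and $\delta_{\lambda^{-1}}(L_1)=L_{\lambda^{-1}}$, every blow-up of $\mathcal{H}^1|_E$ is supported on the $z$-axis, so it is never of the form $c\mathcal{H}^1|_L$ with $L$ horizontal. You work on the side of the curve. Take a.e.\ $t\in G=\gamma^{-1}(E)$, chosen to be a Pansu differentiability point that is also an accumulation point of $G$. Projecting with the $1$-Lipschitz homomorphism $\pi$ gives $|s-t|\,|(a,b)|\le |u'-u|+o(|s-t|)=o(|s-t|)$ along $s\in G$, so $v_t=0$. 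The Pansu expansion, written multiplicatively as $d(\gamma(t)^{-1}\gamma(t+h),\delta_h v_t)=o(|h|)$, then gives $d(\gamma(t),\gamma(t+h))\le |h|\,\|v_t\|+o(|h|)=o(|h|)$ for \emph{all} small $h$. Hence $|\dot\gamma|=0$ a.e.\ on $G$, and $\mathcal{H}^1(E\cap\gamma(\R))=\mathcal{H}^1(\gamma(G))\le\int_G|\dot\gamma|=0$. In particular, the ``main obstacle'' you raise is not one, and choosing $s$ at every scale is unnecessary. Your argument works directly with the paper's definition of pure unrectifiability. The paper's route passes through the characterization of rectifiable sets (applied implicitly to $E\cap\gamma(\R)$), but it is a two-line observation once that black box is granted. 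Both rest on the same phenomenon: Lipschitz curves are infinitesimally horizontal, while $E$ is infinitesimally vertical.

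However, the ``covering argument'' that you say you would use as the primary route has a gap. The bound $|g(s)-g(t)|\lesssim L^2|s-t|^2$ for $g=f^{-1}\circ\gamma|_G$ is correct, but a covering based only on it is exactly critical at exponent $1/2$. Covering $G$ by intervals $I_j$ gives $\sum_j \diam(g(G\cap I_j))^{1/2}\lesssim L\sum_j|I_j|$, i.e.\ only $\mathcal{H}^{1/2}(g(G))\lesssim L\,\mathcal{L}^1(G)$, which is not zero when $G$ has positive measure. Your side remarks do not close this. First, $\mathcal{L}^1(U)=0$ does not imply $\mathcal{H}^{1/2}(U)=0$; $S$ itself is a counterexample. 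Second, $G$ can be totally disconnected with positive measure while $g$ is nonconstant: on a fat Cantor set with complementary gaps $I_i$, the function $g(t)=\sum_{I_i\subset[0,t]}|I_i|^2$ is $2$-H\"older and strictly increasing. So constancy on components gives nothing. The missing idea is to improve $O(|s-t|^2)$ to $o(|s-t|^2)$ at density points of $G$. Near such a $t$, the gaps of $G$ in $[t-h,t+h]$ have length $o(h)$. So for fixed $k$ and small $h$ you can chain from $t$ to any $s\in G$ with $|s-t|\le h$ through points of $G$ at consecutive distances $\le 2h/k$. This gives $|g(s)-g(t)|\lesssim L^2h^2/k$. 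With this upgrade, plus an Egorov-type uniformization or the metric area formula as in your Step 3, the covering argument works. As written it does not, so Steps 2--3 should be your proof.
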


\begin{proof} It follows by \cite[Theorem 3.14]{MSS} and \cite{speight},see also \cite[Remark 4.109]{MR3587666}, that a set $E \subset \He$ is $1$-rectifiable if and only if for $\mathcal{H}^1$-a.e. $p \in E$, the set of the tangent measures of $\mathcal{H}^1|_{E}$ at $p$ consists only of measures of the form $c\mathcal{H}^1|_{L}$ where $L$ is a $1$-dimensional subgroup of $\He$, i.e. a set of the form $\{(at,b t,0):t \in \R\}$ for some $a,b \in \R$ not both zero.

  For $m \in \R$, let $L_m = \{(t,0,mt) : t \in \R\}$. As $E \subset L_1$, we get that $p^{-1}E \subset L_1$ for all $p \in E$. Then $\delta_\lambda(L_1) = L_\lambda$ for $\lambda > 0$. In particular, for $\lambda > 0$ small,
  \begin{align*}
    \delta_{\lambda^{-1}}(p^{-1}E \cap B(0,\lambda)) \subset \delta_{\lambda^{-1}}(L_1 \cap B(0,\lambda)) = L_{\lambda^{-1}} \cap B(0,1), \qquad \forall p \in E.
  \end{align*}
  This tells us that any tangent of a point of $E$ must converge to a set on the $z$-axis. Thus, $E$ has to be purely $1$-unrectifiable. 
\end{proof}

With the set $E$ now identified, we restate Theorem 1.3 accordingly.

\begin{thm}\label{a}
The SIO associated to $E:=f(S)$ and $K_b$ as in \eqref{eq:kerunrec} is bounded in $L^2(E)$; i.e. the truncated singular integrals $$T^b_\ve f\,(p):=\int_{E \stm B(p,\ve)} K_b(p^{-1} \cdot q) f(q) d \mathcal{H}^1(q)$$ are uniformly bounded in $L^2(E)$. Moreover, for every $p \in E$ it holds that $p^{-1}E \subset \supp (K_b)$.
\end{thm}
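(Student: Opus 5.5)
The plan is to compute the kernel explicitly along $E$. For $p=f(t)$ and $q=f(s)$ with $s,t\in S$, both points have vanishing $y$-coordinate. The correction term $\tfrac12(xy'-yx')$ in \eqref{eq:Heis} therefore vanishes, and
\begin{align*}
p^{-1}\cdot q=(s-t,0,s-t).
\end{align*}
Hence, for $s\neq t$,
\begin{align*}
K_b(p^{-1}\cdot q)=\frac{|s-t|}{\left((s-t)^4+(s-t)^2\right)^{1/2}}=\frac{1}{\left(1+(s-t)^2\right)^{1/2}}.
\end{align*}
Since $s,t\in[0,1]$, this gives $2^{-1/2}\le K_b(p^{-1}\cdot q)\le 1$ for all distinct $p,q\in E$. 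This settles the second claim of Theorem \ref{a}: for every $p\in E$, $K_b$ is strictly positive on $p^{-1}E\setminus\{0\}$, so $p^{-1}E\setminus\{0\}\subset\{K_b\neq0\}$.

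For the $L^2$ bound I will use only the fact that the kernel restricted to $E\times E$ is bounded. The singularity of $K_b$ at the origin is never seen, because along $L_1$ the numerator $|x|$ and the denominator $\|\cdot\|^2\asymp|x|$ have the same size.

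First I note that $\mathcal H^1(E)<\infty$. This follows from the Ahlfors regularity \eqref{eq:ADR}: fix $p\in E$ and take $r=\diam(E)\le 2$, so that $E\subset B(p,r)$ and $\mathcal H^1(E)\le C\diam(E)\le 2C$.

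Next, for $g\in L^2(E)$, $\ve>0$ and $p\in E$, I combine the kernel bound with Cauchy--Schwarz:
\begin{align*}
|T^b_\ve g(p)|\le\int_{E\setminus B(p,\ve)}|g(q)|\,d\mathcal H^1(q)\le \mathcal H^1(E)^{1/2}\|g\|_{L^2(E)}.
\end{align*}
Squaring and integrating over $E$ then yields $\|T^b_\ve g\|_{L^2(E)}\le\mathcal H^1(E)\|g\|_{L^2(E)}\le 2C\|g\|_{L^2(E)}$, which is uniform in $\ve$. This is exactly the boundedness required by Definition \ref{SIO}.

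I do not expect a genuine obstacle here. The only points needing care are the group-law computation, namely that $y=0$ kills the twisting term, and the finiteness of $\mathcal H^1(E)$.
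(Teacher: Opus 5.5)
Your proof is correct, and it is more elementary than the paper's.

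\textbf{The paper's route.} It first proves Lemma \ref{1}, the uniform bound $\sup_{p\in E}\int_E K_b(p^{-1}q)\,d\mathcal{H}^1(q)<\infty$, by splitting into dyadic annuli around $p$ and using $K_b\le 4$ on each annulus, see \eqref{eq:K-x-bound}. It then deduces the testing condition $\|T^b_\ve\chi_Q\|_{L^2(Q)}\lesssim|Q|^{1/2}$ on Christ cubes and invokes the T1 theorem. That step in turn relies on $K_b$ being a Calder\'on--Zygmund kernel, and implicitly on $K_b$ being even, so that the adjoint testing condition is the same.

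\textbf{Your route.} You compute the kernel exactly on $E\times E$, namely $K_b(p^{-1}q)=(1+(s-t)^2)^{-1/2}\in[2^{-1/2},1]$, and finish with Cauchy--Schwarz on the finite measure space $E$. This gives the explicit bound $\|T^b_\ve\|_{L^2(E)\to L^2(E)}\le\mathcal{H}^1(E)$; the truncations are even uniformly Hilbert--Schmidt. It needs no Christ cubes, no T1 theorem, and no Calder\'on--Zygmund regularity of $K_b$.

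\textbf{Comparison.} The estimate \eqref{eq:K-x-bound} is already the pointwise bound $K_b\le 4$ on $p^{-1}E$ in disguise, so the extra machinery is not really needed here. Even granting only Lemma \ref{1}, Schur's test for a nonnegative symmetric kernel would finish without T1. The T1 framework is the robust template for kernels lacking such control. In this example, however, the operator is not singular on $E$ at all, and your computation makes that transparent.

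\textbf{The second assertion.} Your lower bound $K_b\ge 2^{-1/2}$ also proves it explicitly, in the stronger form $p^{-1}E\setminus\{0\}\subset\{K_b\neq 0\}$ stated in Theorem \ref{mainthm2}. The paper's proof does not address this part.

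\textbf{One fine point.} Your bound $\mathcal{H}^1(E)\le C\diam(E)$ uses $E\subset B(p,\diam E)$, which for open balls could miss boundary points. This is harmless here: $E$ lies in the coset $pL_1$, and each Koranyi sphere about $p$ meets it in at most two points.
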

To prove Theorem \ref{mainthm2}, we first need the following lemma:
\begin{lem}\label{1}
  $\sup_{p \in E} \int_E K_b(p^{-1}q) ~d\cH^1(q) < \infty$.  
\end{lem}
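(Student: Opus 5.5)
The plan is to compute $K_b(p^{-1}q)$ explicitly for $p,q\in E$ and reduce the lemma to a one-dimensional estimate on the Ahlfors $\frac12$-regular set $S$. Write $p=f(t)$ and $q=f(s)$ with $s,t\in S$. Since both points lie on $L_1$ and the group law is linear on $L_1$ (the symplectic term $\tfrac12(xy'-yx')$ vanishes because $y=y'=0$), we get
\[
p^{-1}q=(s-t,0,s-t).
\]
Hence
\[
K_b(p^{-1}q)=\frac{|s-t|}{\big((s-t)^4+(s-t)^2\big)^{1/2}}=\frac{1}{\big(1+(s-t)^2\big)^{1/2}}\le 1 .
\]
So the kernel restricted to $E$ is bounded, and in fact comparable to $1$ since $|s-t|\le 1$. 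The same computation also yields the \emph{moreover} part of Theorem~\ref{a}: for $q\neq p$ we have $s\neq t$, so $K_b(p^{-1}q)>0$.

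With this bound, the lemma follows from the finiteness of $\cH^1(E)$. Indeed,
\[
\int_E K_b(p^{-1}q)\,d\cH^1(q)\le \cH^1(E)\qquad\text{for every } p\in E.
\]
It remains to check that $\cH^1(E)<\infty$. This follows from Ahlfors regularity \eqref{eq:ADR}: $E$ is covered by a single ball $B(p,\diam E)$ with $\diam E\le 2$, which gives $\cH^1(E)\le C\diam(E)\le 2C$.

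If one prefers to avoid relying on the upper regularity constant at the scale $\diam E$, one can instead argue through $S$. The map $f$ is a bi-Lipschitz bijection from $(S,|\cdot|^{1/2})$ onto $E$, so $\cH^1|_E$ is comparable to the push-forward of $\cH^{1/2}|_S$, and $\cH^{1/2}(S)<\infty$ because $S\subset[0,1]$ is Ahlfors $\frac12$-regular.

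There is no real obstacle here. The only point that needs care is the algebraic verification that $p^{-1}q=(s-t,0,s-t)$. The factor $|x|=|s-t|$ in the numerator exactly cancels the $|s-t|$ singularity in $\|p^{-1}q\|^2\asymp|s-t|$, so the apparent singularity of the kernel disappears on $E$.
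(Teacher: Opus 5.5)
Your proof is correct and is essentially the paper's argument: the paper also uses that $p^{-1}E$ lies on the line $L=\{(t,0,t)\}$, where $|x|=|z|\le\|q\|^2$, to get $K_b\le 4$ on each dyadic annulus $A(2^{-k},2^{-k+1})$, and then sums the Ahlfors upper bounds $\cH^1(E\cap B(0,2^{-k+1}))\lesssim 2^{-k+1}$. Your explicit formula $K_b(p^{-1}q)=(1+(s-t)^2)^{-1/2}\le 1$, combined with $\cH^1(E)<\infty$, gives the same conclusion without the dyadic decomposition, which the uniform bound makes unnecessary.
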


\begin{proof}
  We will left-translate $E$ by $p^{-1}$ so that we can assume $p = 0$. Our estimates will not use the precise choice of $p$.

  For $0 < s < t$, let $A(s,t) = B(0,t) \setminus B(0,s)$. 
  For $k \geq 0$ and $q \in A(2^{-k},2^{-k+1})$, we have that $\|q\| \geq 2^{-k}$. On the other hand, letting $L = \{(t,0,t) : t \in \R\}$, we have that
  \begin{align*}
    E \cap A(2^{-k},2^{-k+1}) \subset L \cap B(0,2^{-k+1}) \subseteq \left\{(t,0,t) : |t| \leq 2^{-2k+2} \right\}.
  \end{align*}
  This gives that
  \begin{align}
    K_b(q) \leq \frac{2^{-2k+2}}{2^{-2k}} = 4, \qquad \forall q \in E \cap A(2^{-k},2^{-k+1}). \label{eq:K-x-bound}
  \end{align}

  As $\diam(E) \leq 2$, we have
  \begin{align*}
    \int_E K_b(q) ~d\cH^1(q) = \sum_{k=0}^\infty \int_{E \cap A(2^{-k},2^{-k+1})} K_b(q) ~d\cH^1(q) \leq 4 \sum_{k=0}^\infty \cH^1(E \cap B(0,2^{-k+1})) \lesssim \sum_{k=0}^\infty 2^{-k+1} \leq 4.
  \end{align*}
  The implicit constant in the penultimate inequality depends only on the Ahlfors regularity constant of $E$ and not on our initial choice of $p$. This proves the lemma.
\end{proof}

\begin{proof}[Proof of Theorem \ref{a}]
Let $\Delta=\{Q\}$ be the Christ cubes of $E$; see \cite{christ} and also \cite[Section 4]{CLZh}. By Lemma \ref{1}, and since $K_b$ is non-negative,
\begin{align*}
\|T^b_\ve\chi_Q\|_{L^2(Q)} &=\Big(\int_Q \Big| \int_{Q\backslash B(p,\ve)} K_b(p^{-1}q)\,d\mathcal{H}^1(q)\Big|^2\,d\mathcal{H}^1(p)\Big)^{1/2}\\
&\leq \Big(\int_Q \Big| \int_E K_b(p^{-1}q)\,d\mathcal{H}^1(q)\Big|^2\,d\mathcal{H}^1(p)\Big)^{1/2} \lesssim |Q|^{1/2},
\end{align*}
uniformly in $\ve>0$ and $Q\in \Delta$. Therefore, the T1 theorem implies that $T_\ve^b$ are bounded on $L^2(E)$ uniformly in $\ve$.
\end{proof}

\bibliographystyle{plain}
\bibliography{ref}
\end{document}